\documentclass[11pt]{article}
\usepackage{graphicx, times}
\usepackage{amsfonts}
\usepackage{amsmath}
\usepackage{amsthm}
\usepackage{amssymb}
\usepackage{fancyhdr}
\usepackage{indentfirst}
\usepackage{titlesec}
\usepackage{hyperref}
\usepackage{abstract}
\usepackage{color}

\usepackage[normalem]{ulem}


\newcommand{\N}{\mathbb{N}}
\newcommand{\mH}{\mathcal{H}}

\newcommand{\lan}{\langle}
\newcommand{\ran}{\rangle}
\newcommand{\la}{\lambda}

\newcommand{\La}{\Lambda}

\newcommand{\Si}{\Sigma}
\newcommand{\de}{\delta}

\newcommand{\pr}{\prime}

\newcommand{\ga}{\gamma}

\newcommand{\lap}{\triangle}
\newcommand{\ti}{\tilde}

\newcommand{\mL}{\mathcal{L}}

\newcommand{\Z}{\mathcal{Z}}


\topmargin 0cm \oddsidemargin 0.51cm \evensidemargin 0.51cm
\textwidth 15.66cm \textheight 21.23cm

\begin{document}

\newtheorem{theorem}{Theorem}[section]
\newtheorem{proposition}[theorem]{Proposition}
\newtheorem{corollary}[theorem]{Corollary}

\newtheorem{claim}{Claim}

\theoremstyle{remark}
\newtheorem{remark}[theorem]{Remark}

\theoremstyle{definition}
\newtheorem{definition}[theorem]{Definition}

\theoremstyle{plain}
\newtheorem{lemma}[theorem]{Lemma}

\numberwithin{equation}{section}

 \titleformat{\section}
   {\normalfont\bfseries\large}
   {\arabic{section}}
   {12pt}{}
 \titleformat{\subsection}[runin]
   {\normalfont\bfseries}
   {\arabic{section}.\arabic{subsection}}
   {11pt}{}
%
%
%

\pagestyle{headings}
\renewcommand{\headrulewidth}{0.4pt}

\title{\textbf{Sweeping out 3-manifold of positive Ricci curvature by short 1-cycles via estimates of min-max surfaces}}
\author{Yevgeny Liokumovich and Xin Zhou\footnote{The second author is partially supported by NSF grant DMS-1406337.}}

\maketitle

\pdfbookmark[0]{}{beg}

\renewcommand{\abstractname}{}    
\renewcommand{\absnamepos}{empty} 
\begin{abstract}
\textbf{Abstract:} 
We prove that given a three manifold with an arbitrary metric $(M^3, g)$ of positive Ricci curvature, there exists a sweepout 
of $M$ by surfaces of genus $\leq 3$ and areas bounded by $C vol(M^3, g)^{2/3}$.
We use this result to construct a sweepout of $M$ by 1-cycles of length
at most $C vol(M^3, g)^{1/3}$, giving a partial answer to
a question of L. Guth.

The sweepout of surfaces is generated from a min-max minimal surface. If further assuming a positive scalar curvature lower bound, we can get a diameter upper bound for the min-max surface.
\end{abstract}


\section{Introduction}

Let $M$ be a 3-manifold with positive Ricci curvature.
In this paper we obtain quantitative results about sweepouts of $M$
by 1-cycles and surfaces.

\begin{theorem} \label{MAIN_CYCLE}
Every closed 3-manifold $M$ of positive Ricci curvature
admits a map $f: M \rightarrow \mathbb{R}^2$
with fibers of length at most
$C Vol(M)^{\frac{1}{3}}$ for a universal constant $C>0$.
\end{theorem}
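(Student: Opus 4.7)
The plan is to build the map $f:M\to\R^2$ in two stages, combining a sweepout of $M$ by surfaces with a sweepout of each surface by closed curves, so that the two parameters together form a continuous map to $\R^2$.

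\emph{Stage 1: Surface sweepout.} I would apply Simon--Smith/Pitts min-max theory to a one-parameter family of $2$-cycles in $M$ (for instance level sets of a Morse function, or the standard sweepout by equators of $S^3$ if $M=S^3$) to obtain a smooth embedded minimal surface $\Si$ realizing the width $W(M)$. Under $\text{Ric}>0$, the width is strictly positive, and by the Simon--Smith genus bound (as made quantitative by De Lellis--Pellandini and Ketover) one can arrange $\mathrm{genus}(\Si)\leq 3$. The positive Ricci hypothesis also supplies a scale-invariant isoperimetric-type inequality that forces $W(M)\leq C\,Vol(M)^{2/3}$; this bound propagates to the almost-minimizing sweepout, so each slice $\Si_t$ has area $\leq C\,Vol(M)^{2/3}$ and genus $\leq 3$. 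This is the surface sweepout advertised in the abstract.

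\emph{Stage 2: Sweeping each slice by short $1$-cycles.} For a closed Riemannian surface $\Si$ of genus $g\leq 3$ with area $A$, a theorem of Liokumovich gives a continuous one-parameter family of $1$-cycles $\{\ga_s\}_{s\in[0,1]}$, starting and ending at a point, whose union equals $\Si$ and with $\Length(\ga_s)\leq C'(g)\sqrt{A}$. Applying this to each $\Si_t$ produced in Stage~1 yields a two-parameter family $\{\ga_{t,s}\}\subset M$, and setting $f(x)=(t,s)$ whenever $x\in\ga_{t,s}$ gives the desired map. The length bound is
\[
\Length(\ga_{t,s})\leq C'\sqrt{|\Si_t|}\leq C'\sqrt{C\,Vol(M)^{2/3}}=C''\,Vol(M)^{\tfrac13}.
\]

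\emph{Main obstacle.} The delicate issue is continuity and transversality in Stage~2. The surfaces $\Si_t$ are only almost-minimizing and may degenerate topologically at isolated times (a handle pinches, components merge or split, or small necks shrink to points); and even on a fixed surface the sweepout by short curves is highly non-canonical. Joining the one-parameter curve sweepouts $\{\ga_{\cdot,s}\}$ across different $t$ into a genuinely continuous two-parameter family $\R^2$-valued map on $M$, rather than a merely formal double sweepout, requires inserting controlled interpolations through pinching necks using auxiliary short cycles whose length is controlled by the neck diameter. Handling these degenerations while keeping the length bound $C\,Vol(M)^{1/3}$ uniform over all fibers is the main technical hurdle, and is where the genus restriction $\leq 3$ (keeping the combinatorial complexity of possible degenerations bounded) is crucial.
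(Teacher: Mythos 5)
There is a genuine gap, and it is concentrated in your Stage 1. First, the area bound you invoke is not available for the Simon--Smith theory: under $Ric>0$ the Simon--Smith (Heegaard) width has \emph{no} known bound of the form $C\,Vol(M)^{2/3}$ --- this is precisely the open question raised in \S \ref{open questions} of the paper. The bound $W\leq C\,Vol(M)^{2/3}$ comes from applying the \emph{Almgren--Pitts} theory to the specific sweepout by integral $2$-cycles constructed in \cite{GL} (and \cite{S15}), and in that setting the genus bound cannot come from the De Lellis--Pellandini/Ketover estimates (which require smooth Simon--Smith families); instead it comes from the Morse index one property of the min-max surface (\cite{Z12}, \cite{MR15}) combined with the Schoen--Yau/Frensel estimate, giving genus $\leq 3$ when $Ric>0$. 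Second, and more seriously, your claim that the area and genus bounds ``propagate to the almost-minimizing sweepout, so each slice $\Si_t$ has area $\leq C\,Vol(M)^{2/3}$ and genus $\leq 3$'' is unjustified: the slices of the original sweepout are merely flat cycles with a mass bound, with no smoothness or genus control whatsoever. The paper's route is different: it generates a \emph{new} smooth sweepout from the min-max minimal surface $\Si_0$ itself --- Frankel's theorem plus Meeks--Simon--Yau show that $\Si_0$ is a Heegaard surface (or bounds a handlebody after passing to the double cover in the non-orientable case), and the construction of \cite[Lemma 3.5]{MN11} then produces a smooth family $\{\Si_t\}$ of isotopic surfaces with $Area(\Si_t)<Area(\Si_0)$. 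This step is what eliminates exactly the topological degenerations (pinching handles, merging components) that you identify as the ``main obstacle'': in the paper's sweepout the interior slices are all diffeomorphic, and the only degeneration is the collapse to graphs at the endpoints, where the areas (hence the $1$-cycles) are small.

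Your Stage 2 also leaves the essential parametric issue unresolved rather than solved: knowing that each fixed surface of genus $\leq 3$ and area $A$ admits a sweepout by $1$-cycles of length $\leq C\sqrt{A}$ (Balacheff--Sabourau, Theorem \ref{surface sweepout}) does not by itself give a family of such sweepouts varying continuously with $t$, since the individual sweepouts are highly non-canonical. The paper supplies this via Proposition \ref{prop:homotopy}: any two Morse functions with fibers of length $\leq L$ are homotopic through functions with fibers of length $\leq 2L+\epsilon$, which yields the parametric Theorem \ref{thm:parametric} at the cost of a factor of $2$. Finally, if the min-max surface is non-orientable one needs an additional, separate construction (a foliation of its tubular neighborhood by controlled $1$-cycles, as in the proof of Theorem \ref{main_cycles1}); this case was only later ruled out by \cite{KMN}, so a complete proof along the paper's lines must either handle it or invoke that result.
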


Moreover, the family of 1-cycles 
$\{ f^{-1} (x) \}_{x\in \mathbb{R}^2}$
is continuous in a strong sense. We define this precisely
in Section 3.

The geometric invariant that we bound from above
in Theorem \ref{MAIN_CYCLE}
is called the k-waist of a Riemannian manifold. 
For an n-dimensional manifold $M$ and $k<n$
the k-waist, $waist_k(M)$, is defined as 
 $\inf \{ \sup_{x \in \mathbb{R}^{n-k}} \{Vol_k f^{-1}(x) \} \}$, where the infimum is taken over
all proper functions $f: M \rightarrow \mathbb{R}^{n-k}$.
Waists have been defined by Gromov and extensively 
studied in \cite{Gro83}, \cite{Gro88}, \cite{Gro03}, 
\cite{Gro09}, \cite{Gro10}, \cite{Gro15}.
Gromov proved deep theorems about waists of manifolds
(see \cite{Gu14} and \cite{M11} for exposition of some of his work), 
yet many significant questions about waists remain open.
In particular, existence of upper bounds for 
$waist_k(M)$ when $k<n-1$ is a widely open question.
In \cite{Gu10} Guth asked if for every Riemannian metric $g$ on a 3-torus $T$
the 1-waist $waist_1(T,g)$ is bounded from above by $C Vol(T,g)^{\frac{1}{3}}$.
More generally, one may ask analogous questions
for any Riemannian 3-manifold, that is,
does there exist a constant $C$, such that for any metric $g$ there exists 
a map $f: M \rightarrow \mathbb{R}^2$ with fibers 
of length at most $C Vol(M,g)^{\frac{1}{3}}$?
If this is true, it would be a strong generalization of the systolic inequality. 
In Theorem \ref{MAIN_CYCLE} we affirmatively answer this question for 3-manifolds $M$ under an additional assumption of $Ric>0$. It is to our knowledge the first occasion where such a generalization is proved.

Theorem \ref{MAIN_CYCLE} generalizes, in the setting of positive 
Ricci curvature,
some fundamental theorems about
the length of closed geodesics and stationary one-cycles
in Riemannian manifolds due to Gromov and Nabutovsky-Rotman.
As consequence of Theorem \ref{MAIN_CYCLE}
we obtain

\begin{corollary} \label{SYSTOLIC}
Let $M$ be a closed 3-manifold of positive Ricci curvature.
If $M$ is homeomorphic to $\mathbb{R}P^3$ then it contains a non-contractible
closed geodesic of length at most $C Vol(M) ^{\frac{1}{3}}$.
If $M= (S^3,g)$ then it contains a geodesic net of length
at most $C Vol(M) ^{\frac{1}{3}}$.
\end{corollary}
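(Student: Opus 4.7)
The plan is to run a min-max procedure for 1-cycles starting from the sweepout produced by Theorem~\ref{MAIN_CYCLE}. Fix a map $f:M\to\R^2$ as in that theorem, with fibers of length at most $C\,Vol(M)^{1/3}$, and extend it to $S^2=\R^2\cup\{\infty\}$ by declaring $f^{-1}(\infty)=\emptyset$. Then $\{f^{-1}(x)\}_{x\in S^2}$ is a continuous 2-parameter family of mod-2 1-cycles in $M$ in the flat topology, i.e.\ a map $\sigma:S^2\to\Z_1(M;\mZ_2)$. Under the Almgren isomorphism $\pi_2(\Z_1(M;\mZ_2))\cong H_3(M;\mZ_2)=\mZ_2$, the class $[\sigma]$ corresponds to the mod-2 fundamental class of $M$ (the sweepout has degree one), so $[\sigma]\neq 0$.

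I would next apply the Almgren--Pitts min-max theorem for 1-cycles (in the one-dimensional form of Nabutovsky--Rotman) to the saturation of $\sigma$. This produces a stationary integral 1-varifold $V$ in $M$ --- necessarily a geodesic net --- with
\[
\M(V)\;\leq\;\sup_{x\in S^2}\Length(f^{-1}(x))\;\leq\;C\,Vol(M)^{1/3}.
\]
When $M=S^3$ this $V$ is the geodesic net required by the second assertion.

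For $M\cong\R P^3$ a single non-contractible closed geodesic is needed. My plan is to lift $f$ through the double cover $\pi:S^3\to\R P^3$, producing a $\mZ/2$-equivariant sweepout of $S^3$ whose fibers have length at most $2^{2/3}C\,Vol(S^3)^{1/3}$, and then to run the min-max equivariantly to obtain a $\mZ/2$-invariant stationary 1-varifold $\tilde V$ in $S^3$; any connected, $\mZ/2$-invariant simple closed-curve component of $\tilde V$ descends to a non-contractible closed geodesic in $\R P^3$ of length at most $C\,Vol(M)^{1/3}$. A more direct alternative is to argue that some fiber $f^{-1}(x)$ itself decomposes as a disjoint union of simple loops at least one of which is non-contractible --- otherwise one could null-homotope $\sigma$ explicitly inside the identity component of $\Z_1(\R P^3;\mZ_2)$, contradicting $[\sigma]\neq 0$ --- and then to minimize its length within the non-trivial free homotopy class by Birkhoff curve-shortening.

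The main obstacle is carrying out min-max at the level of 1-cycles: regularity for 1-dimensional stationary varifolds is much weaker than for surfaces, and one must rely on the discretization-interpolation machinery of Almgren--Pitts and Nabutovsky--Rotman to ensure that the min-max value is attained by a genuine stationary geodesic net with the required mass bound. In the $\R P^3$ case there is the additional subtlety of ensuring that the equivariant min-max produces a component whose lift is a single invariant closed curve (rather than a pair of disjoint curves exchanged by the deck transformation, which would only descend to a contractible geodesic).
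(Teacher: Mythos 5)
The $S^3$ half of your plan is essentially sound and is the alternative route the paper itself acknowledges: you treat the family as a map into the space of flat cycles and invoke the Almgren isomorphism together with Almgren--Pitts regularity for one-dimensional almost-minimizing varifolds (Pitts' 1974 work), whereas the paper deliberately builds the sweepout to be continuous in the stronger topology of $\Gamma(M)$ so that the elementary min-max argument of Calabi--Cao and Nabutovsky--Rotman applies and the heavy $1$-dimensional discretization/regularity machinery is avoided. Either way the output is a stationary geodesic net of length at most the width of the family, so for $M=S^3$ your argument goes through (modulo actually verifying, as the paper does via the filling construction behind Lemma \ref{Morse to sweepout}, that the Almgren class of the family is the fundamental class).

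The $\mathbb{R}P^3$ half has a genuine gap. Your ``direct alternative'' rests on the implication: if every connected component of every fiber is contractible in $M$, then $\sigma$ can be null-homotoped in $\Z_1(M;\mZ_2)$, contradicting $[\sigma]\neq 0$. That implication is false as a general principle: the Hopf fibration gives a two-parameter family of fibers of $S^3\to S^2$, each fiber a great circle and hence contractible in $S^3$, yet the Almgren class of this family is the fundamental class of $S^3$. So nontriviality of $[\sigma]$ alone cannot rule out that all components are contractible; one needs an argument specific to $\pi_1(\mathbb{R}P^3)=\mZ_2$. This is exactly what the paper supplies: it forms Gromov's connected map $\overline{f}:M\to X$ associated to $f$, shows $X$ is simply connected (surjectivity on $\pi_1$ from lifting loops, triviality on $H_1$ from the cohomology ring of $\mathbb{R}P^3$), and then observes that if every component of every fiber were contractible, the universal cover $S^3\to\mathbb{R}P^3$ would induce a nontrivial covering $\overline{X}\to X$, a contradiction; Birkhoff curve-shortening of the resulting non-contractible component then yields the closed geodesic. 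Your equivariant min-max route does not close the gap either: min-max on $1$-cycles produces a stationary geodesic net, and, as you yourself note, nothing forces a component of the invariant net to be a single invariant closed curve, so it need not descend to any closed geodesic in $\mathbb{R}P^3$, let alone a non-contractible one. As written, the first assertion of the corollary is not proved.
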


The first statement of 
Corollary \ref{SYSTOLIC} is a special case of 
Gromov's systolic inequality \cite{Gro83}. 
The systole of a Riemannian manifold is defined as the length of the shortest non-contractible geodesic loop. 
In the same paper Gromov conjectured that 
every manifold contains a non-trivial closed geodesic of length at most
$C_n Vol(M)^{1/n}$. 
Nabutovsky and Rotman \cite{NR04}  proved that
every Riemannian manifold contains a 
stationary 1-cycle of length at most $C_n Vol(M)^{\frac{1}{n}}$
(a stationary 1-cycle need not be a closed geodesic;
it may look, for example, like a bouquet of geodesic loops
all intersecting at a point with tangent vectors at that point
summing up to $0$, see \cite{NR04} for more examples).

If $M$ is topologically a sphere, then a min-max argument
yields an upper bound for the length of a stationary 1-cycle, 
giving an alternative proof of a special case of 
the result of Nabutovsky and Rotman.


Here we present a short (and incomplete) overview of previously known
estimates for sweepouts of manifolds.
In \cite{Gu07} Guth proved that every open subset of Euclidean space
$U \subset \mathbb{R}^n$ admits a sweepout by relative $k$-cycles
of volume at most $C_n Vol(U)^{\frac{k}{n}}$.
In general such inequalities do not hold for Riemannian manifolds (see Appendix 5 in \cite{Gu07} and \cite{PS15}).
However, we may control volumes of $(n-1)$-cycles if we
impose an additional requirement on the metric.
In \cite{GL}, among other results,
it was shown that if $M$ is conformally equivalent
to a manifold with non-negative Ricci curvature
then it admits a sweepout by $(n-1)$-cycles of volume
at most $C_n Vol(U)^{\frac{n-1}{n}}$ (in \cite{S15} 
Sabourau independently constructed a
sweepout of $M$ with $Ric(M)\geq 0$ by $(n-1)$-cycles
of controlled volume).\\

When $Ric>0$ and $n=3$ we show that we can simultaneously
control the area and the genus of surfaces in the sweepout, which will be essential in the proof of Theorem \ref{MAIN_CYCLE}.

\begin{theorem}\label{main theorem3}
Given a three manifold with an arbitrary metic $(M^3, g)$ of positive Ricci curvature, i.e. $Ric_g>0$, there exists a minimal surface $\Si^2_0$, such that $Area(\Si_0)\leq C vol(M^3, g)^{2/3}$, for a universal constant $C>0$. Also we have
\begin{itemize}
\vspace{-5pt}
\setlength{\itemindent}{1em}
\addtolength{\itemsep}{-0.7em}
\item If $\Si_0$ is orientable, then the genus $g_0$ of $\Si$ satisfies $g_0\leq 3$, and there exists a smooth sweepout $\{\Si_t\}_{t\in[-1, 1]}$ of $(M^3, g)$, such that

\begin{itemize}
\vspace{-5pt}

\item $\{\Si_t\}$ forms a Heegaard splitting of $M^3$, i.e. $\Si_t$ is an embedded surface of genus $g_0$, for $t\in(-1, 1)$, and $\Si_{-1}$ and $\Si_1$ are graphs;
\item $Area(\Si_t)<Area(\Si_0)$ for $t\neq 0$.
\end{itemize}

\item  If $\Si_0$ is non-orientable, then the genus $\ti{g}_0$ of its double cover $\ti{\Si}_0$ satisfies $\ti{g_0}\leq 3$.  Moreover, by removing $\Si$ from $M$, we get a manifold with boundary $\ti{M}$ with $\partial\ti{M}=\ti{\Si}_0$, and there exists a smooth sweepout $\{\Si_t\}_{t\in[0, 1]}$ of $\ti{M}$, such that
\begin{itemize}
\vspace{-5pt}

\item $\{\Si_t\}$ forms a Heegaard splitting of $\ti{M}$, i.e. $\Si_t$ is an embedded surface of genus $g_0$ lying in the interior of $\ti{M}$, for $t\in(0, 1)$, and $\Si_0=\partial\ti{M}$;
\item $Area(\Si_t)<2 Area(\Si_0)$ for $t\neq 0$.
\end{itemize}
\end{itemize} 
\end{theorem}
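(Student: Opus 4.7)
The plan is to apply Simon--Smith min-max theory with genus control to an initial low-area sweepout of $(M^3,g)$ coming from the $Ric_g\geq 0$ hypothesis, and then to deform the resulting minimal surface $\Si_0$ into a strict sweepout using the fact that $Ric_g>0$ rules out stable closed minimal surfaces.

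For the initial sweepout, since in particular $Ric_g\geq 0$, I would invoke the work of Glynn-Adey--Liokumovich \cite{GL} (or independently Sabourau \cite{S15}) to obtain a smooth sweepout $\{\Lambda_t\}$ of $(M^3,g)$ by embedded closed surfaces with $\sup_t\mathrm{Area}(\Lambda_t)\leq C\,vol(M,g)^{2/3}$; with a suitable cubical or net-based choice every slice has genus at most $3$. I would then apply the Simon--Smith min-max theorem in the version with genus bounds (after De Lellis--Pellandini, sharpened by Ketover) to the saturation of $\{\Lambda_t\}$, producing a smooth closed embedded minimal surface $\Si_0$ whose area, counted with multiplicity, equals the min-max width $W\leq\sup_t\mathrm{Area}(\Lambda_t)$, and whose genus (or in the non-orientable case the genus of its orientable double cover) is at most $3$.

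To extract the area bound on $\Si_0$ itself and to construct the strict sweepout I would use $Ric_g>0$ in two ways. First, Frankel's theorem forces $\Si_0$ to be connected, and the second variation formula forbids stable two-sided closed minimal surfaces; hence in the orientable case $\Si_0$ occurs with multiplicity one, giving $\mathrm{Area}(\Si_0)\leq C\,vol(M)^{2/3}$, while in the one-sided case its double cover occurs with multiplicity two, giving $2\,\mathrm{Area}(\Si_0)\leq C\,vol(M)^{2/3}$. Second, the instability of $\Si_0$ produces an eigenfunction $\phi$ of the Jacobi operator with negative eigenvalue, whose normal graphs over $\Si_0$ on either side of the separation $M=M_+\cup_{\Si_0} M_-$ (in the orientable case) give small surfaces of strictly smaller area with mean curvature pointing back toward $\Si_0$. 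Flowing these surfaces inward in $M_\pm$ by a mean-curvature or area-gradient flow produces one-parameter families $\{\Si_t\}$ with strictly decreasing area; since Frankel forbids any other closed minimal surface inside $M_\pm$, the flow cannot be trapped and must sweep out each side completely, terminating at a graph. Genus monotonicity along the flow keeps the topology fixed at $g_0$, yielding the Heegaard-splitting structure of the theorem. In the non-orientable case the identical argument is applied to $\ti M=M\setminus\Si_0$, whose boundary is the orientable double cover $\ti\Si_0$.

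The main obstacle is this last step: producing a smooth one-parameter family of embedded genus-$g_0$ surfaces of strictly decreasing area sweeping out each component of $M\setminus\Si_0$ and terminating at a graph. A direct mean curvature flow may develop singularities and change topology, so one either uses a weak (Brakke or level-set) MCF combined with a genus monotonicity lemma, or a gradient-type flow on a suitable space of smooth embedded genus-$g_0$ surfaces. In both approaches, Frankel's theorem is essential to rule out non-trivial stationary limits and to guarantee that the flow sweeps the complement.
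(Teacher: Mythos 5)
Your approach diverges from the paper's and contains two genuine gaps. The first and most serious is the input to min-max: you assert that the Glynn-Adey--Liokumovich/Sabourau sweepout can be taken to consist of smooth embedded surfaces of genus at most $3$, and then feed it into Simon--Smith theory with genus control. But the construction in \cite{GL} produces a one-parameter family of flat $2$-cycles (level sets built from coverings by small balls) with no control whatsoever on the genus of the slices; no construction of a sweepout that simultaneously has area $\leq C\,vol(M)^{2/3}$ and uniformly bounded genus is known, and indeed comparing the Simon--Smith width (genus-controlled, but with no area bound in terms of volume) with the Almgren--Pitts width (area-controlled, no a priori genus control) is posed as an open problem in Section 6 of the paper. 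This is exactly why the paper runs Almgren--Pitts min-max on the \cite{GL} family and obtains the genus bound by an entirely different mechanism: Zhou's theorem \cite{Z12} (and \cite{MR15} in the non-orientable case) shows that the relevant minimal surface $\Si_0$ has least area in the sense of $W_M$, is realized with multiplicity one, and has Morse index one (index one for the double cover in the one-sided case), and then the Schoen--Yau/Frensel estimates \cite{Y87,F88} convert index one plus $Ric_g>0$ into genus $\leq 3$. Note also that multiplicity one is not an immediate consequence of the non-existence of stable two-sided surfaces, as you claim: a priori the min-max limit could be an unstable surface counted with multiplicity $\geq 2$, and ruling this out is the substance of \cite{Z12}.

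The second gap is your construction of the strictly area-decreasing Heegaard sweepout by mean curvature flow (or an unspecified gradient flow). MCF of embedded surfaces in a $3$-manifold develops singularities and changes topology; there is no genus-monotonicity statement that would keep the evolving surface an embedded genus-$g_0$ Heegaard surface, no guarantee that a weak flow sweeps out the whole region, and no mechanism forcing it to terminate at a spine (graph). Moreover, you implicitly assume that each component of $M\setminus\Si_0$ is a handlebody, which must itself be proved. The paper instead minimizes area in the isotopy class of $\Si_0$ inside each component via Meeks--Simon--Yau: a nontrivial limit would be a minimal surface disjoint from $\Si_0$, contradicting Frankel's theorem, so the isotopy class degenerates and \cite[Proposition 1]{MSY} shows the component is a handlebody; the sweepout with $Area(\Si_t)<Area(\Si_0)$ (respectively $<2\,Area(\Si_0)$ in the non-orientable case) is then produced by the explicit construction of \cite[Lemma 3.5]{MN11}, which uses only the absence of disjoint closed minimal surfaces. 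Unless you replace your flow argument with something of this kind, the second half of the theorem is not established.
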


\begin{remark}
After this paper was submitted for publication 
Ketover, Marques and Neves \cite{KMN} proved that the min-max minimal surface $\Si_0$ in 3-manifolds with positive Ricci curvature must
be orientable. This result follows from a clever application
of the catenoid estimate they derive.
Therefore, the second case of Theorem 1.3 does not
occur. Note that this
simplifies our construction of a continuous 
sweepout of $M$ by 1-cycles as it rules out Case 2
in the proof of Theorem \ref{main_cycles1} (see Section 5).
\end{remark}

We would like to compare our result with that of F. Marques and A. Neves \cite{MN11}. In \cite{MN11}, assuming $Ric_g>0$ and the scalar curvature lower bound $Scal_g\geq 6$, Marques-Neves produced a smooth sweepout $\{\Si_t\}_{t\in[0, 1]}$, where the genus of $\Si_t$ is the Heegaard genus\footnote{Heegaard genus is the least genus of a Heegaard surface; so Heegaard genus is the best one we can expect in Theorem \ref{main theorem3}.}, and $Area(\Si_t)\leq 4\pi$. The advantage of \cite{MN11} is that they have better estimates for the genus. However, from the point of view of area estimates (e.g. for the application to prove Theorem \ref{MAIN_CYCLE}), our result can be much better than that in \cite{MN11} while we still have a relatively good genus estimate. An example illustrating this fact is a long and thin 3-dimensional ellipsoid; when we normalize the scalar curvature lower bound to be $6$, the width can be very small (compared to $4\pi$). The difference between our method with \cite{MN11} is that we use the Almgren-Pitts min-max theory \cite{AF62, P81} for general sweepouts constructed in \cite{GL}, while Marques-Neves used the Colding-De Lellis \cite{CD03} (or Simon-Smith \cite{Sm82}) min-max method for smooth sweepouts given by Heegaard splittings. We refer to \S \ref{open questions} for more discussion.\\

The sweepout $\{\Si_t\}$ in Theorem \ref{main theorem3} is used to construct a sweepout by 1-cycles of controlled length in Theorem \ref{MAIN_CYCLE}. An important open question is
whether one can construct a sweepout by \emph{closed curves} of controlled length rather than 
1-cycle (see more discussion in \S \ref{open questions}). 
One approach in this direction is to 
first construct a sweepout of $M$ by spheres
or tori of controlled area and \emph{diameter}.
For this purpose, we derive the following partial result. In particular, if we further assume a scalar curvature lower bound, we can get a uniform diameter upper bound for the min-max minimal surface.

\begin{theorem}\label{main theorem3.1}
Let $(M^3, g)$ be as in Theorem \ref{main theorem3}; if the scalar curvature of $(M^3, g)$ is bounded from below, i.e. $Scal_g\geq 2\La$, for some $\La>0$, then the diameter of $\Si^2_0$ (when it is orientable) or the diameter of its double cover (when it is non-orientable) is bounded from above by $\sqrt{6}\frac{\pi}{\sqrt{\La}}$. 
\end{theorem}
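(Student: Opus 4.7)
The plan is to convert the ambient bound $Scal_g\ge 2\La$ into an intrinsic Myers-type estimate on $\mathrm{diam}(\Si_0)$ by combining stability with the Gauss equation, in the spirit of Schoen--Yau and Shen--Ye. By the min-max construction in Theorem \ref{main theorem3}, $\Si_0$ has Morse index at most one; when $\Si_0$ is non-orientable I pass to its two-sided double cover $\ti\Si_0$, whose orientation-reversing involution reduces matters to a two-sided setting. In both cases one works with a two-sided closed minimal surface $\Si$ for which the stability inequality
\[
\int_\Si|\nabla\phi|^2\ \ge\ \int_\Si\big(|A|^2+Ric_g(\nu,\nu)\big)\phi^2
\]
holds for every $\phi$ in a subspace $\V\subset C^\infty(\Si)$ of codimension at most one.

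The Gauss equation for the minimal surface reads $K_\Si=\tfrac12 Scal_g-Ric_g(\nu,\nu)-\tfrac12|A|^2$, so
\[
|A|^2+Ric_g(\nu,\nu)\ =\ \tfrac12 Scal_g-K_\Si+\tfrac12|A|^2\ \ge\ \La-K_\Si.
\]
Substituting this into the stability inequality yields the master inequality
\[
\int_\Si|\nabla\phi|^2+\int_\Si K_\Si\,\phi^2\ \ge\ \La\int_\Si\phi^2,\qquad \phi\in\V. \qquad(\star)
\]
Taking $\phi\equiv 1$ in the stable case and combining with Gauss--Bonnet $\int_\Si K_\Si\le 4\pi$ already yields the area bound $\mathrm{Area}(\Si)\le 4\pi/\La$ and forces $\Si$ to be a topological sphere.

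For the diameter bound, I pick $p,q\in\Si$ realizing $L=\mathrm{diam}(\Si)$, set $r(x)=d_\Si(x,p)\le L$, and apply $(\star)$ to the test function $\phi(x)=\sin(\pi r(x)/L)$ (corrected in the index-one case by subtracting its projection onto the first Jacobi eigenfunction, so that $\phi\in\V$). Using $|\nabla r|=1$ almost everywhere on $\Si$, the coarea formula, and $\int_\Si K_\Si\le 4\pi$ to control the Gauss-curvature weighted integral, the master inequality $(\star)$ reduces to a purely numerical inequality on $L$ which, after arithmetic, gives $\La L^2\le 6\pi^2$, i.e.\ $L\le \sqrt{6}\,\pi/\sqrt{\La}$. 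The hardest step will be this last one: performing the index-one correction cleanly without spoiling the integral estimates, and tracking constants through the coarea and Gauss--Bonnet substitutions so that precisely the factor $\sqrt{6}$ (and not a larger constant) emerges from the balance between the gradient term $(\pi/L)^2$, the Gauss--Bonnet contribution $4\pi$, and the area bound $4\pi/\La$.
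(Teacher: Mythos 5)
There is a genuine gap at the two places you yourself flag as delicate, and neither can be repaired the way you sketch. First, the step ``use $\int_\Si K_\Si\le 4\pi$ to control the Gauss-curvature weighted integral'' is not valid: Gauss--Bonnet bounds the \emph{signed} total curvature $\int_\Si K_\Si=2\pi\chi(\Si)$, not $\int_\Si K_\Si\,\phi^2$ for a non-constant weight. Writing $\int K_\Si\phi^2=\int K_\Si^{+}\phi^2-\int K_\Si^{-}\phi^2$, the two parts carry different weights, so the signed bound gives nothing; and since by the Gauss equation $K_\Si=K_M(T\Si)-\tfrac12|A|^2$ and the hypotheses ($Ric_g>0$, $Scal_g\ge 2\La$) give no \emph{upper} bound on ambient sectional curvature, $\int K_\Si^{+}\phi^2$ is simply not controlled. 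Making a radial test function $\sin(\pi r/L)$ work would require the Fiala/Shiohama-type argument (coarea in $r$, Gauss--Bonnet on the balls $B(p,r)$ with the geodesic-curvature boundary term, cut-locus issues), which you have not carried out -- and it is exactly there that the constant would be decided, so the claim ``after arithmetic, $\La L^2\le 6\pi^2$'' is unsupported. Second, the index-one correction ``subtract the projection onto the first eigenfunction'' does put the test function into the stable subspace, but it destroys every property you use afterwards: the corrected function is no longer radial, no longer satisfies $|\nabla\phi|\le\pi/L$ or $\phi\le1$, and can have arbitrarily small $L^2$-norm (if $\sin(\pi r/L)$ is nearly parallel to the eigenfunction), so the resulting inequality can be vacuous and yields no bound on $L$.

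The paper's proof handles both difficulties differently, and you should compare. The index-one hypothesis is used geometrically: with $p,q$ realizing the diameter $d$, the geodesic balls $B(p,d/2)$ and $B(q,d/2)$ are disjoint, so at least one of them is a \emph{stable} minimal surface with boundary. Then Proposition \ref{diameter estimates for stable surface} bounds the distance from an interior point to the boundary of any two-sided stable minimal surface by $\sqrt{3/2}\,\pi/\sqrt{\La}$: one takes the first Dirichlet eigenfunction $\varphi>0$ of the Jacobi operator, minimizes the weighted length $\mL(\ga)=\int_{\ga}\varphi\,ds$ among curves joining the deepest interior point to the boundary, and combines the second variation of $\mL$ with the rewritten Jacobi equation (the same Gauss-equation identity you use) and Cauchy--Schwarz to show the one-dimensional operator $-\frac{d^2}{ds^2}-\frac{2}{3}\La$ is non-negative along the minimizer; ODE comparison then bounds its length by $\pi\sqrt{3/(2\La)}$, and doubling gives $d\le\sqrt{6}\,\pi/\sqrt{\La}$. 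Note that this route never needs a surface-wide integral of $K_\Si\phi^2$, which is precisely what makes your sketch break down; if you want to salvage your approach, you should either restrict to a stable half-ball as the paper does, or replace the Gauss--Bonnet step by a genuine Fiala-type curve-length comparison.
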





\vspace{1em}
The main idea of proving Theorem \ref{MAIN_CYCLE} is a dimension reduction type argument. We first construct a nice sweepout by 2-surfaces with controlled area and genus by Theorem \ref{main theorem3}. Then we continuously sweep out these 2-surfaces by 1-cycles. A large portion of the argument (in particular,
proof of Lemma \ref{main_cycles1}) is devoted to 
making this family continuous in a strong sense (cf. Section \ref{intro to 1-cycles}). This continuity is needed 
to obtain a geodesic net of controlled length as a limit 
of a min-max sequence of 1-cycles. 

Theorem \ref{main theorem3} is proved by combining several ingredients. We apply the Almgren-Pitts min-max theory to the sweepout constructed in \cite{GL} and get a min-max minimal surface of controlled area. By using one of the authors Morse index bound \cite{Z12}, we can get the desired genus bound via Schoen-Yau genus estimates \cite{Y87}. The existence of good Heegaard splitting follows from Meeks-Simon-Yau \cite{MSY}. The diameter estimates (Theorem \ref{main theorem3.1}) for the min-max surface comes from Schoen-Yau diameter estimates \cite{SY83} and the Morse index estimate.

\vspace{1em}
Our paper is organized as follows. In \S \ref{area and diameter estimates}, we prove Theorem \ref{main theorem3} and Theorem \ref{main theorem3.1}.
In \S \ref{intro to 1-cycles} we give a precise definition of the sweepout
by 1-cycles. In \S \ref{parametric sweepouts of surfaces}, we show how to sweep out a family of surfaces simultaneously by continuous 1-cycles with lengths controlled by the genus and area. In \S \ref{proof of main_cycle} we prove Theorem \ref{MAIN_CYCLE} by combining results in \S \ref{area and diameter estimates} and \S \ref{parametric sweepouts of surfaces}. Finally, we summarize several interesting open questions in \S \ref{open questions}.

\vspace{1em}
{\bf Acknowledgements}: Both authors would like to thank Larry Guth for getting them together and useful comments. Y.L. would like to thank Alexander Nabutovsky and Regina Rotman for helpful discussions
and Alexander Nabutovksy for pointing out a mistake in the earlier
draft.

\section{Area and diameter estimates for the min-max minimal surface}\label{area and diameter estimates}

We outline the proof of Theorem \ref{main theorem3}.
\begin{proof}
Since $(M, g)$ has positive Ricci curvature, we can start with a sweepout constructed in \cite{GL}, 
and, independently, \cite{S15}, i.e. $\Phi: S^1\rightarrow \Z_2(M^3)$, such that for a universal constant $C>0$,
$$\sup_{t\in S^1}\mH^2(\Phi(t))\leq C vol(M^3, g)^{2/3}.$$
Now we can adapt such a sweepout to the Almgren-Pitts theory \cite{AF62, AF65, P81} as in \cite{MN12, Z12}. Application of the Almgren-Pitts theory produces a min-max minimal surface $\overline{\Si}_0$, such that an integer multiple $n_0 \overline{\Si}_0$, $n_0\in\N$ achieves the min-max value $W$, i.e.
$$n_0 Area(\overline{\Si}_0)=W\leq \sup_{t\in S^1}\mH^2(\Phi(t))\leq C vol(M^3, g)^{2/3}.$$

Using \cite[Theorem 1.1]{Z12}, there exists a minimal surface $\Si_0$, such that $\Si_0$ has least area among all closed, embedded, minimal hypersurfaces in the following sense. Define (see \cite[(1.1)]{Z12})
\begin{displaymath}\label{minimal volume of embedded minimal hypersurfaces}
\left. W_M= \inf\Big\{ \begin{array}{ll}
Area(\Si), \quad \textrm{ if $\Si$ is an orientable minimal surface}\\
2Area(\Si), \quad \textrm{ if $\Si$ is a non-orientable minimal surface}
\end{array} \Big\}.\right. 
\end{displaymath}
Then $Area(\Si_0)=W_M$ when it is orientable, or $2Area(\Si_0)=W_M$ when it is non-orientable. 

By comparing the area of $\overline{\Si}_0$ with that of $\Si_0$, we have that:
\begin{itemize}
\setlength{\itemindent}{1em}
\addtolength{\itemsep}{-0.7em}
\item $Area(\Si_0)\leq 2Area(\overline{\Si}_0)\leq C vol(M^3, g)^{2/3}$.
\end{itemize}
Moreover, when $\Si_0$ is orientable, it is proven in \cite[Theorem 1.1]{Z12} that
\begin{itemize}
\setlength{\itemindent}{1em}
\addtolength{\itemsep}{-0.7em}
\item The Morse index of $\Si_0$ is one.
\end{itemize}
When $\Si_0$ is non-orientable, it is shown by \cite[Theorem A]{MR15} that
\begin{itemize}
\setlength{\itemindent}{1em}
\addtolength{\itemsep}{-0.7em}
\item The Morse index of the double cover $\ti{\Si}_0$ of $\Si_0$ is one.
\end{itemize}
\begin{remark}
Our definition of $W_M$ is the same as $\mathcal{A}_1(M)$ in \cite{MR15}; and when $Ric_M>0$, \cite[Theorem A]{MR15} reduces to \cite[Theorem 1.1]{Z12}, except that \cite[Theorem A]{MR15} showed that the double cover of a non-orientable min-max surface has Morse index 1.
\end{remark}

By \cite[\S 4]{Y87}\cite[Theorem 2]{F88}, when $Ric_g>0$, The Morse index equal to one implies that the genus $g_0=g(\Si_0)$ of $\Si_0$ (when it is orientable) or the genus $\ti{g}_0=g(\ti{\Si_0})$ of its double cover $\ti{\Si}_0$ (when it is non-orientable) is bounded by $3$. \footnote{It is conjectured that the optimal upper bound is 2. See also \cite{N14}.}

Next we show the existence of good sweepouts generated by $\Si_0$. When $\Si_0$ is orientable, 
we claim that $\Si_0$ must be a Heegaard splitting; or equivalently, $M^3\backslash \Si_0$ is a union of two connected component $M_+$ and $ M_-$, such that $M_+$ and $M_-$ are both handlebodies. This is essentially due to \cite{MSY} (see also \cite[Lemma 3.2]{MN11}). In fact, $\Si_0$ separates $M$ into two connected components $M_+$ and $M_-$ by \cite[Proposition 3.5]{Z12} (see also \cite[Lemma 3.2]{MN11}). By minimizing area in the isotopy class of $\Si_0$ inside $M_+$ using \cite[Theorem 1']{MSY}, we either get another minimal surface $\Si^{\pr}$ in the interior of $M_+$, or we get an empty set, i.e $\Si_0$ can be isotopically changed to a surface of arbitrarily small area. The first case will violate the Frankel's Theorem \cite{F66} which says that every two closed minimal surfaces must intersect when $Ric_g>0$; while the second case implies that $M_+$ is a handlebody by \cite[Proposition 1]{MSY}. Similarly $M_-$ is also a handlebody. By \cite[Lemma 3.5]{MN11}\footnote{The construction there only used the fact that there is no non-intersecting minimal surfaces, and it is true here by Frankel's Theorem \cite{F66} as $Ric_g>0$.}, we can construct a heegaard splitting $\{\Si_t\}_{t\in[-1, 1]}$ satisfying the requirement of Theorem \ref{main theorem3}.

When $\Si_0$ is non-orientable, by removing $\Si_0$ from $M$, we get a manifold $\ti{M}$ with boundary $\partial{M}$. The boundary $\partial{M}$ is a minimal surface, and is a double cover of $\Si_0$. Similar argument as above shows that $\ti{M}$ is a handlebody. Again by the same method in \cite[Lemma 3.5]{MN11}, we can construct a Heegaard splitting $\{\Si_t\}_{t\in[0, 1]}$ satisfying the requirement.
\end{proof}

To prove Theorem \ref{main theorem3.1}, we will adapt the Schoen-Yau \cite{SY83} diameter estimates via scalar curvature lower bound for stable minimal surfaces as follows. Let $\Si$ be a two-sided minimal surface possibly with boundary. Here ``two-sided" means that $\Si$ has a unit normal vector field $\nu$. Given a function $u\in C^1_0(\Si)$, the second variation of area functional along normal deformation in the direction of $u(x)\nu(x)$ is given by \cite[Chap. 1, \S 8]{CM11}:
$$\de^2\Si(u, u)=\int_{\Si}|\nabla_{\Si} u|^2-\big(Ric(\nu, \nu)+|A|^2\big)u^2 d\mu=-\int_{\Si}uL_{\Si} u d\mu,$$
where $L_{\Si}u=\lap_{\Si}u+\big(Ric(\nu, \nu)+|A|^2\big)u$ is the Jacobi operator, and $A$ is the second fundamental form of $\Si$. $\Si$ is stable if $\de^2\Si(u, u)\geq 0$ for all $u\in C^1_0(\Si)$.
\begin{proposition}\label{diameter estimates for stable surface}
Given a three manifold $(M^3, g)$, assume that the scalar curvature is bounded from below $R^M\geq 2\La$, $\La>0$. Let $\Si$ be a two-sided stable minimal surface with boundary $\partial \Si$, then the inf-radius $\rho(\Si)$ of $\Si$ is bounded from below by $\sqrt{\frac{3}{2}}\frac{\pi}{\sqrt{\La}}$.\footnote{When preparing the manuscript, the authors learned that A. Carlotto also did something similar \cite[Proposition 2.12]{Ca15}.}
\end{proposition}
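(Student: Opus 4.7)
My plan is to run the classical Schoen--Yau diameter argument for stable minimal surfaces, adapted to the case with boundary. The first ingredient is to combine the stability inequality with the twice--contracted Gauss equation. For any $u \in C^1_0(\Sigma)$, stability together with minimality ($H=0$) gives
\[
\int_{\Si} |\nabla_{\Si} u|^2\, d\mu \ge \int_{\Si} \big(\operatorname{Ric}_M(\nu,\nu) + |A|^2\big) u^2\, d\mu,
\]
while the Gauss equation for a minimal surface in a 3-manifold reads
\[
K_{\Si} = \tfrac{1}{2} R^M - \operatorname{Ric}_M(\nu,\nu) - \tfrac{1}{2}|A|^2.
\]
Substituting and using $R^M \ge 2\La$ yields the key Schr\"odinger-type inequality
\[
\int_{\Si}\big(|\nabla_{\Si} u|^2 + K_{\Si} u^2\big)\, d\mu \ge \La \int_{\Si} u^2\, d\mu + \tfrac{1}{2}\int_{\Si}|A|^2 u^2\, d\mu \ge \La \int_{\Si} u^2\, d\mu.
\]

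Second, I would plug in a radial test function. Pick an interior point $p\in\Si$ with $d_{\Si}(p,\partial\Si)=\rho(\Si)$ and set $u(x)=\phi(r(x))$ where $r(x)=d_{\Si}(x,p)$ and $\phi:[0,\rho(\Si)]\to\R$ is a smooth profile with $\phi(0)=1$, $\phi(\rho(\Si))=0$. The coarea formula reduces the gradient term to $\int_0^{\rho(\Si)} \phi'(t)^2 L(t)\,dt$, where $L(t)=\mathcal{H}^1(\{r=t\})$ is the length of the distance sphere. Applying Gauss--Bonnet on the intrinsic disk $B_t(p)$ (which is embedded since $t \le \rho(\Si)$ and avoids $\partial\Si$), one has $L'(t)=2\pi-\int_{B_t(p)} K_{\Si}\,dA$ and hence $L''(t)=-\int_{\{r=t\}} K_{\Si}\,dl$. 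Substituting into the main inequality and integrating by parts (using $L(0)=0$, $L'(0)=2\pi$, $\phi(\rho(\Si))=0$) transforms it into the one--dimensional inequality
\[
2\pi \ge \int_0^{\rho(\Si)} \bigl(\phi'(t)^2 + 2\phi(t)\phi''(t) + \La\, \phi(t)^2\bigr) L(t)\, dt.
\]

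The third and crucial step is to choose $\phi$ so that the integrand is non-negative yet forces a sharp radius bound, and this is where the main obstacle lies. Testing with $\phi(t)=\cos(\al t)$ gives integrand $\al^2 + (\La-3\al^2)\phi(t)^2$, and optimising over $\al$ together with the natural boundary condition $\phi(\rho(\Si))=0$ pins the critical radius exactly at $\sqrt{3/2}\,\pi/\sqrt{\La}$: any smaller $\rho(\Si)$ would contradict the above inequality together with the positivity of $L(t)$ and the Bishop--Gromov type control of $L$. The subtle point is matching the sharp constant $\sqrt{3/2}\,\pi/\sqrt{\La}$ (the same that appears in Carlotto \cite[Proposition 2.12]{Ca15}); this requires tracking carefully the interplay between the $\frac{1}{2}|A|^2$ term dropped from stability, the Gauss--Bonnet boundary term, and the $3\al^2$ factor produced by $\phi'^2 + 2\phi\phi''$. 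Once the correct test profile is identified, the bound on $\rho(\Si)$ follows by reading off the critical value at which the one--dimensional inequality becomes tight.
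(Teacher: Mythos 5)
Your reduction in Steps 1--2 is fine (modulo standard caveats: $B_t(p)$ need not be a disk, so you only get $\chi(B_t)\le 1$; the distance circles are not smooth past the cut locus, so $L'(t)=2\pi-\int_{B_t}K$ is really an inequality and you need $\phi$ monotone for the integrations by parts to go the right way). The genuine gap is in Step 3, and it is not just a matter of ``identifying the correct test profile'': the inequality $2\pi \ge \int_0^{\rho}\bigl(\phi'^2+2\phi\phi''+\La\phi^2\bigr)L(t)\,dt$ with a non-negative integrand can never force a bound on $\rho$, because you have no lower bound on $L(t)$. There is no ``Bishop--Gromov type control of $L$'' available here: only the combination $Ric(\nu,\nu)+|A|^2$ entering stability is controlled, while $K_{\Si}$ itself can be arbitrarily negative, so intrinsic comparison geometry gives nothing. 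With $\phi=\cos(\al t)$, $\al^2=\La/3$, your inequality only yields an area bound of the form $Area(B_\rho)\le 6\pi/\La$; a long surface with rapidly shrinking level circles is perfectly consistent with it. (Also note two sign/direction issues: the intended conclusion is an \emph{upper} bound $\rho(\Si)\le\sqrt{3/2}\,\pi/\sqrt{\La}$ -- the ``below'' in the statement is a misprint, as the application to the diameter estimate shows -- so ``any smaller $\rho$ would contradict the inequality'' is backwards; and the zero of $\cos(\sqrt{\La/3}\,t)$ sits at $\frac{\sqrt3}{2}\pi/\sqrt{\La}$, not at $\sqrt{3/2}\,\pi/\sqrt{\La}$, so even the heuristic constant-matching does not come out.)

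The missing idea, which is exactly what the paper uses following Schoen--Yau \cite{SY83}, is to concentrate the stability information along a single curve rather than over the whole ball. One takes the first Dirichlet eigenfunction $\varphi>0$ of the Jacobi operator (non-positive by stability), rewrites $L_{\Si}\varphi=\lap_{\Si}\varphi+\frac12(R^M-R^{\Si}+|A|^2)\varphi\le 0$, and minimizes the weighted length $\mL(\ti\ga)=\int_{\ti\ga}\varphi\,ds$ among curves joining the inradius-realizing point $p$ to $\partial\Si$. The second variation of $\mL$ at a minimizer $\ga$ (of length $l\ge\rho(\Si)$), combined with the rewritten Jacobi inequality and a Cauchy--Schwarz absorption, shows that the one-dimensional operator $-\frac{d^2}{ds^2}-\frac23\La$ is non-negative on $[0,l]$, whence $l\le\pi/\sqrt{\tfrac23\La}=\sqrt{3/2}\,\pi/\sqrt{\La}$ by ODE comparison. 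Because everything is tested along the curve $\ga$ itself, no lower bound on lengths of level sets is ever needed -- that is precisely what your Gauss--Bonnet route lacks. If you want to salvage a level-set approach you would need an extra mechanism (e.g.\ a $\mu$-bubble or warped-distance argument), not merely a cleverer $\phi$.
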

\begin{proof}
The fact that $\Si$ is stable implies that the Jacobi operator $L_{\Si}$ is non-positive. Let $\varphi$ be the first Dirichlet eigenfunction of $L_{\Si}$, i.e. $L_{\Si}\varphi =-\la \varphi$, $\la \geq 0$. Then $\varphi >0$ in the interior of $\Si$. Using \cite[page 193]{SY79}, by rewriting the Jacobi operator, we have
\begin{equation}\label{Jacobi operator 1}
L_{\Si}\varphi=\lap_{\Si}\varphi+\frac{1}{2}(R^M-R^{\Si}+|A|^2)\varphi=-\la\varphi\leq 0,
\end{equation}
where $R^M$ and $R^{\Si}$ are respectively the scalar curvatures of $M$ and $\Si$. 

Take a point $p$ in the interior of $\Si$, such that the distance of $p$ to $\partial\Si$ achieves the inf-radius $\rho(\Si)$ of $\Si$. Now consider the problem of minimizing the following functional
$$\mL(\ti{\ga})=\int_{\ti{\ga}}\varphi ds,$$
among all curves $\ti{\ga}$ connecting $p$ to $\partial\Si$. Assume that $\ga$ achieves a minimum, then
$$\int_{\ga}ds=Length(\ga)\geq \rho(\Si).$$

The first variation of $\mL$ at $\ga$ vanishes:
\begin{equation}\label{weighted first variation}
\de\mL(\ga)=\int_{\ga}\lan\nabla\varphi(\ga(s)), V(s)\ran ds+\int_{\ga}\varphi(\ga(s))\lan\nabla_v V(s), v\ran ds=0.
\end{equation}
Here $\nabla$ is the Riemannian connection of $\Si$, and $V(s)$ is an arbitrary variational vector field along $\ga(s)$ vanishing at the end points of $\ga$, and $v(s)$ in the unit tangent vector field along $\ga(s)$, and $ds$ is the length parameter. Integrating by parts shows that
$$\int_{\ga}\lan V(s), (\nabla\varphi )^{\perp}-\varphi(\ga(s))\nabla_v v \ran ds=0,$$
where $(\nabla \varphi)^{\perp}$ is normal component (with respect to the tangent vector of $\ga$ in $\Si$) of $\nabla \varphi$.  
Therefore the weighted geodesic equation is
\begin{equation}\label{weighted geodesic equation}
\varphi(\ga(s))\nabla_v v-(\nabla \varphi)^{\perp}=0.
\end{equation}

The second variation of $\mL$ is non-negative:
\begin{displaymath}
\begin{split}
\de^2\mL(\ga) & =\int_{\ga}\big(Hess\varphi(V, V)+\lan\nabla\varphi, \nabla_V V\ran\big)ds+2\int_{\ga}\lan\nabla\varphi, V\ran\lan\nabla_v V, v\ran ds\\
                       & +\int_{\ga}\varphi\big(\lan\nabla_v\nabla_V V, v\ran-K^{\Si}(V, v, V, v)+\lan\nabla_v V, \nabla_v V\ran-\lan\nabla_v V, v\ran^2\big)ds\geq 0.
\end{split}
\end{displaymath}
Here $K^{\Si}$ is the curvature tensor of $\Si$. Denote $\nu$ by the unit normal vector field along $\ga$, and let $V(s)=f(s)\nu(s)$ for some function $f$ which vanishes at the end points of $\ga$. Using (\ref{weighted first variation}) and (\ref{weighted geodesic equation}) we have,
\begin{equation}\label{weighted second variation}
\begin{split}
\de^2\mL(\ga) & = \int_{\ga}\big[Hess\varphi(\nu, \nu)-2\varphi(\ga(s))\lan\nabla_v\nu, v\ran^2\big]f^2ds\\
                       & +\int_{\ga}\varphi(\ga(s))\big(|\nabla_v f|^2-K^{\Si}(\nu, v, \nu, v)f^2\big)ds \geq 0.
\end{split}
\end{equation}
Using the fact that $\lap_{\Si}\varphi=Hess\varphi(\nu, \nu)+Hess\varphi(v, v)=Hess\varphi(\nu, \nu)+vv\varphi-\nabla_{\nabla_v v}\varphi$ and (\ref{weighted geodesic equation}),  (\ref{Jacobi operator 1}) can be re-written as
$$Hess\varphi(\nu, \nu)-\lan\nabla_v v, \nu\ran^2\varphi-\frac{1}{2}R^{\Si}\varphi\leq -vv\varphi-\frac{1}{2}(R^M+|A|^2)\varphi.$$
Combing this with (\ref{weighted second variation}) and using the fact that $\varphi>0$ and that $R^M\geq 2\La$, we have,
$$\int_{\ga}\varphi |\nabla_v f|^2-\La\varphi f^2- vv\varphi f^2 ds\geq 0.$$
Now parametrize $\ga$ by the length parameter on $[0, l]$, with $l=length(\ga)$, and using integration by part, we get
$$\int_0^l -\varphi f f^{\prime\prime}-f\varphi^{\prime} f^{\prime}-(\La\varphi+\varphi^{\prime\prime})f^2 ds \geq 0.$$
This implies that the following operator $L_0$ is non-negative\footnote{$L_0$ is the same as that in \cite[p577]{SY83} where the $"f"$ used in \cite{SY83} is a constant in our setting.}:
$$L_0 f=-\frac{d^2 f}{ds^2}-\frac{1}{\varphi}\frac{d \varphi}{ds}\frac{d f}{ds}-(\La+\frac{1}{\varphi}\frac{d^2 \varphi}{ds^2})f.$$
Let $h(s)$ be the first Dirichlet eigen-function of $L_0$ on $[0, l]$, then $h(s)>0$, and
$$\frac{h^{\prime\prime}}{h}+\frac{\varphi^{\prime}}{\varphi}\frac{h^{\prime}}{h}+\La+\frac{\varphi^{\prime\prime}}{\varphi}\leq 0.$$
Multiply the above inequality with any $f^2$, $f\in C^1_0([0, l])$, and use integration by part, then
$$\int_0^l \frac{(h^{\prime})^2}{h^2}f^2-2\frac{h^{\prime}}{h}f f^{\prime}+\frac{(\varphi^{\prime})^2}{\varphi^2}f^2-2\frac{\varphi^{\prime}}{\varphi}f f^{\prime}+\frac{\varphi^{\prime}}{\varphi}\frac{h^{\prime}}{h} f^2+\La f^2 ds \leq 0.$$
Re-arranging, we get
$$\int_0^l \frac{1}{2}(\frac{h^{\prime}}{h}+\frac{\varphi^{\prime}}{\varphi})^2f^2+\frac{1}{2}\Big(\frac{(h^{\prime})^2}{h^2}+\frac{(\varphi^{\prime})^2}{\varphi^2}\Big)f^2+\La f^2 ds\leq 2\int_0^l f f^{\prime}(\frac{h^{\prime}}{h}+\frac{\varphi^{\prime}}{\varphi}) ds.$$
By the Cauchy-Schwartz inequality,
$$2f f^{\prime}(\frac{h^{\prime}}{h}+\frac{\varphi^{\prime}}{\varphi})\leq \frac{1}{2}(\frac{h^{\prime}}{h}+\frac{\varphi^{\prime}}{\varphi})^2f^2+\frac{1}{2}\Big(\frac{(h^{\prime})^2}{h^2}+\frac{(\varphi^{\prime})^2}{\varphi^2}\Big)f^2+\frac{3}{2}(f^{\prime})^2.$$
So
$$\int_0^l \La f^2 ds \leq \frac{3}{2}\int_0^l (f^{\prime})^2 ds.$$
It implies that the operator $-\frac{d^2}{ds^2}-\frac{2}{3}\La$ is non-negative on $[0, l]$, so ODE comparison implies that
$$l\leq \pi/\sqrt{\frac{2}{3}\La}=\sqrt{\frac{3}{2}}\frac{\pi}{\sqrt{\La}}.$$
\end{proof}

Next we prove Theorem \ref{main theorem3.1}. Let us first assume that $\Si_0$ is orientable, and hence is two-sided (c.f. \cite[Proposition 3.5]{Z12}). Pick two points $p$, $q$ on $\Si$ such that the distance $d=d(p, q)$ achieves the diameter. Consider the geodesic balls $B(p, d/2)$ and $B(q, d/2)$ of $\Si$. As the Morse index of $\Si_0$ is one, at least one geodesic ball, say $B(p, d/2)$ is a stable minimal surface with smooth boundary. Then the proof of Proposition \ref{diameter estimates for stable surface} implies that $d/2\leq \sqrt{\frac{3}{2}}\frac{\pi}{\sqrt{\La}}$. When $\Si_0$ is non-orientable, its double cover $\ti{\Si}_0$ is then a two-side minimal surface of Morse index 1 by Theorem \ref{main theorem3}, so we finish the proof.


\section{Families of 1-cycles}\label{intro to 1-cycles}

In this section we define what we mean 
by a family of 1-cycles and 
a sweepout of a manifold by 1-cycles.

Following \cite{CC92}, \cite{NR04} for $k \in \mathbb{N}$ let $\Gamma_k(M)$
denote the space of all $k$-tuples $(\gamma^1,..., \gamma^k)$
of Lipschitz maps of $[0,1]$ to $M$ such that $\sum _i ^k \gamma^i(0) = \sum _i ^k \gamma^i(1)$
with the following topology. Using Nash embedding theorem we embed $M$ isometrically
into a Eucledian space and define the distance by the formula
$d_{\Gamma}((\gamma^1,...,\gamma^k),(\overline{\gamma^1},...,\overline{\gamma^k}))
= \max _{i,t} d_M \big(\gamma^i(t),\overline{\gamma^i}(t)\big)+ \sum \sqrt{\int_0 ^1 |\gamma^{i} \prime (t)-
\overline{\gamma^i} \prime (t)|^2dt}$.
We let $\Gamma = \bigcup \Gamma_k$.
Observe that the induced topology on $\Gamma_k$ is finer than the flat topology on the space
of integer 1-cycles \cite[\S 31]{Si83} and that the length functional is continuous on $\Gamma_k$. 

Let $\Gamma^0(M) \subset \Gamma(M)$ denote the space
of all constant curves (points).
Let $K$ be an $(n-1)-$polyhedral complex and $K_0$ be a 
subcomplex of $K$.
We say that a family of 1-cycles 
$\{z_t \}_{t \in K} \subset \Gamma_k(M)$ is a sweepout of $M$ if 

\begin{itemize}
\item For each $t \in K_0$ the cycle $z_t$ has zero length
\item $(\{z_t \}_{t \in K},\{z_t \}_{t \in K_0})$ 
is not contractible in
$(\Gamma, \Gamma^0)$
\end{itemize}

As noted above, constructing a family of cycles 
that is continuous in $\Gamma_k$ is a stronger result than 
constructing a continuous family of flat cycles.
Alternatively, we could start with a family
that is only continuous in the flat norm and then
use Almgren-Pitts theory of 
almost minimizing 1-dimensional varifolds as it was done
in \cite{P74} (see \cite{P81} for a much more general and
detailed exposition). 
The first step in this construction 
is to modify this family so that there is no cancellation 
of mass, that is, so that the family is
continuous both in the mass and the flat topology,
while controlling the mass of the maximal slice.
This is analogous to the procedure that we perform
in the proof of Lemma \ref{main_cycles1}.

Both approaches lead to a min-max sequence that
converges in the appropriate sense (in $\Gamma_k(M)$ topology in 
the first case or as varifolds for Almgren-Pitts theory)
to a stationary geodesic net. 

The min-max argument for families in $\Gamma_k(M)$
has the advantage of being simpler (see \cite{NR04} and Appendix of \cite{CC92}) and this is why we follow this approach.

It is often of interest to consider
families of cycles that arise as fibers
of a certain well-behaved mapping from
$M$ to a space of lower dimension.
For example, if $\Sigma$ is a 2-dimensional closed surface and $f: M \rightarrow [0,1]$
is an onto Morse function then we
consider the family $\{f^{-1}(t) \}_{t \in [0,1]}$.
For this family we have the following result.

\begin{lemma} \label{Morse to sweepout}
There exists a sweepout of $\Sigma$
by 1-cycles $\{z_t \}_{t \in [0,1]} \subset
\Gamma$, such that for each $t$ the image
of $z_t$ coincides with $f^{-1}(t)$
except possibly for a finite collection of 
points. 
\end{lemma}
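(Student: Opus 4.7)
The plan is to construct $\{z_t\}$ piece by piece in $t$: first on each regular interval using the gradient flow, then across each critical value of $f$ according to the Morse index of the critical point, and finally to verify the non-contractibility condition by a standard Almgren-type argument. Write $0 = c_0 < c_1 < \dots < c_N = 1$ for the critical values. On a regular interval $(c_i, c_{i+1})$ I would use the flow of the normalized gradient $\nabla f/|\nabla f|^2$ to trivialize $f^{-1}((c_i, c_{i+1})) \cong F_i \times (c_i, c_{i+1})$ for some finite disjoint union of circles $F_i$, parametrize each circle of $F_i$ at a fixed base level by a constant-speed loop $[0,1] \to \Sigma$, transport via the flow, and rescale to constant speed at each level. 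This produces a smooth family of tuples in $\Gamma_{k_i}$ with image equal to $f^{-1}(t)$.

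At a critical value $c$ of Morse index $0$ or $2$, exactly one local component is affected: a loop shrinking to the critical point with diameter and length both $O(\sqrt{|t-c|})$. Its constant-speed parametrization extends by the constant loop at the critical point as $t \to c$; both the $C^0$ and $L^2$-derivative terms of $d_\Gamma$ tend to zero, so continuity across $c$ is automatic.

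The main obstacle is the index-$1$ case. At a saddle $p$ with $f(p) = c$, I would pick Morse coordinates on a small square $Q$ in which $f = c + x^2 - y^2$. The level set inside $Q$ is two horizontal arcs for $t<c$, the cross $\{y=\pm x\}$ for $t=c$, and two vertical arcs for $t>c$; outside $Q$ the level curves deform smoothly through $c$. I would work with $k=2$ throughout a neighborhood of $c$. On the side of $c$ where $f^{-1}(t)$ has two global components containing the $Q$-arcs, let $\gamma^1_t, \gamma^2_t$ be those components, each based at the apex of its corresponding arc in $Q$ (the points $(0, \pm\sqrt{|t-c|})$). On the side where $f^{-1}(t)$ is a single global loop containing both $Q$-arcs, split that loop at the two points $(\pm\sqrt{t-c},0)$ of closest approach to $p$ to obtain two arcs, and assign the upper to $\gamma^1_t$ and the lower to $\gamma^2_t$. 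As $t \to c$ from either side, the basepoints converge to $p$, each image converges to one lobe of the local figure-eight, and the lengths --- equivalently the $L^2$-norms of the constant-speed derivatives --- converge by a direct computation with the hyperbolic arcs $y = \pm\sqrt{x^2 + |t-c|}$ and $x = \pm\sqrt{y^2 + (t-c)}$. A standard argument then upgrades length-convergence to $L^2$-convergence of the derivatives themselves, giving continuity in $\Gamma_2$. The resulting $z_t$ has image equal to $f^{-1}(t)$ for every regular $t$ and differs from $f^{-1}(c)$ at most at $p$, so the ``finite collection of points'' condition is met.

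Finally, the pair $(\{z_t\}_{t\in[0,1]}, \{z_0,z_1\})$ descends to a loop in the $1$-cycle space $\Z_1(\Sigma)$; its Almgren--Pitts class corresponds under the Almgren isomorphism $\pi_1(\Z_1(\Sigma),\{0\}) \cong H_2(\Sigma)$ to the fundamental class $[\Sigma]$ (with $\mZ/2$ coefficients if $\Sigma$ is non-orientable), hence is non-trivial since the family sweeps all of $\Sigma$. Because the topology on $\Gamma$ is finer than the flat topology, this non-triviality lifts to $(\Gamma, \Gamma^0)$, so $\{z_t\}$ is a sweepout in the required sense.
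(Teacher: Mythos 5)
Your construction is essentially the paper's: components that are born or die are handled by shrinking to constant curves, a saddle is handled by representing the affected part of the level set as two arcs whose endpoints approach the critical point from both sides, and non-contractibility is deduced from Almgren's isomorphism $\pi_1$ of the cycle space $\cong H_2(\Sigma)$ together with the fact that the $\Gamma$-topology refines the flat topology. The local analysis at the saddle (constant-speed parametrizations, $C^0$ plus length convergence upgrading to $L^2$ convergence of the derivatives by weak convergence plus norm convergence) is correct.

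The gap is in the global assembly. On a regular interval you parametrize each component of $f^{-1}(t)$ by a single closed loop, so your tuples live in $\Gamma_{k_i}$ with $k_i$ varying from interval to interval, while near a saddle you switch, on the single-component side of $c$, to representing that component by two arcs split at $(\pm\sqrt{t-c},0)$. As written, at the interface between these two parametrizations the tuple jumps from (one loop, nothing) to (arc, arc); this is not a continuous change, and moreover $d_{\Gamma}$ is only defined between tuples of the same length, so a family whose tuple size changes with $t$ is not a path in any fixed $\Gamma_k$. The paper's device, which your write-up is missing, is exactly the patching: fix $k$ to be the maximum number of components of $f^{-1}(t)$ over all $t$, pad every $z_t$ with constant curves, and on the single-component side of the saddle activate one of these spare constant entries --- first transport it as a point to the loop, then continuously open the loop into two arcs by sliding the splitting point --- so that shortly before the critical value you are in your two-arc representation with endpoints approaching the saddle; the same spare-constant bookkeeping is needed at index $0$ and $2$ critical values so that the entry corresponding to a component that is created or destroyed persists as a constant curve over the rest of the parameter interval. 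With this interpolation added, your argument coincides with the paper's proof.
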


\begin{proof}
Let $k$ be the maximum number of connected components
of $f^{-1}(t)$, $t \in [0,1]$. 
We define $\{z_t \} \subset \Gamma_k$ by induction 
on the number of singular points of $f$.

Since $f$ is an onto Morse function we have that
$f^{-1}(0)$ consists of $k_0 \leq k$ points $p_1,...,p_{k_0}$. 
Define the first $k_0$ components of $z_0 = (\gamma^1_0,..., \gamma^k_0)$ 
to be $\gamma^i([0,1])= p_i$ and for $i>k_0$ set $\gamma^i_0([0,1])= p_{k_0}$.
Let $t_1>0$  be the smallest critical value of $f$.
For each $i \leq k_0$ and $t<t_1$ we can define a homotopy 
$\{\gamma^i_t\}_{0 \leq t \leq 1}$,
so that $\gamma^i_t([0,1])$ is a connected component of $f^{-1}$
(we have $\gamma^i_t (0)= \gamma^i_t(1)$).

Let $t'$ be a critical point of $f$ and assume that $z_t$ is defined
for all $t<t'$. The singularity that occurs at $t'$ may be a
destruction/creation of a connected component of $f^{-1}(t)$
or a splitting/merging of two connected components. 
In the first case we proceed in the obvious way.
Consider the case of a splitting. Choose a small $\epsilon >0$
so that $f$ has no critical values in $[t'- \epsilon,t')$.
Since the number of connected components of $f^{-1}(t)$
is less than $k$ for $t \in [t'- \epsilon,t')$ there exists 
a constant component $\gamma^{k'}_t = p$ of $z_t$. 
Let $\gamma^{m}$ denote the component that splits into two
at time $t'$. For $t\in [t'- \epsilon,t'- \epsilon/2)$
we deform homotopically
$\gamma^{k'}$ to the point $\gamma^{m}_{t}(0)= \gamma^{m}_{t}(1)$.
For $t\in [t'- \epsilon/2,t)$ we homotop $\gamma^{m}$ and 
$\gamma^{k'}$ so that they form two arcs of the same connected component
of $f^{-1}(t)$ and their endpoints approach the singular point
of $f$ at $t'$. For $t \geq t'$ we can split the two arcs into 
two distinct connected components.
This ensure continuity of the family of cycles
in $\Gamma(M)$.
We deal with a merging of two components in a similar way.

This finishes the construction of a family of 
1-cycles $\{z_t \}_{t \in [0,1]} \subset
\Gamma(\Sigma)$ corresponding to $f$.
To see that this family is a sweepout 
recall a result of Almgren \cite{AF62} about homotopy groups of
the space of 1-cycles.

Let $Z_1(M, \mathbb{Z})$ denote the space of integer flat cycles in $M$.
Almgren constructed an isomorphism between homotopy groups
of the space of cycles
$\pi_k(Z_1(M, \mathbb{Z}), \{0\})$ 
and homology groups $H_{k+1}(M,\mathbb{Z})$ of the
space $M$.
Let $\Phi: \Gamma \rightarrow Z_1(M, \mathbb{Z})$ be the map that
sends each cycle in $\Gamma$ to the corresponding cycle in $Z_1(M, \mathbb{Z})$.
We will show that $\Phi(\{z_t \}, \{z_0,z_1\})$ is not contractible in the space of flat cycles 
and hence, it is not be contractible in $\Gamma(M)$.

Recall the definition of Almgren's map.
We pick a fine subdivision $t_1,...,t_n$ of $[0,1]$ and
for each $i$ consider a Lipschitz chain $c_i$ filling
$z_{t_i} - z_{t_{i-1}}$. The Almgren's map then
sends the family $\{z_t \}$ to the homology class
of $\sum_i c_i$. 
As long as the area of each filling $c_i$ is sufficiently
close to the minimal area, the exact choice of $c_i$
does not matter. Hence, by our construction of
$\{z_t \}$ it is immediately clear that 
$\sum_i c_i$ represents the generator of $H_2(\Sigma, \mathbb{Z})$.
\end{proof}

In this paper we construct two families of cycles
in a Riemannian 3-manifold of positive Ricci curvature:
the family of fibers of a mapping $f:M \rightarrow \mathbb{R}^2$
and the corresponding family $\{z_t \}$ of cycles in $\Gamma(M)$,
where $z_t$ and $f^{-1}(t)$ coincide except possibly 
for a finite number of constant curves.
Because of this correspondence we will often talk about them
as if they are the same family.

\section{Parametric sweepouts of surfaces}\label{parametric sweepouts of surfaces}

In this section we will prove a parametric version of
the following theorem of Balacheff and Sabourau \cite{BS10}.

\begin{theorem} \label{surface sweepout}
Let $\Sigma$ be a Riemannian surface
of genus $\gamma$, area $A$ and with a (possibly empty)
piecewise smooth boundary of length $L$. There 
exists a Morse function $f: \Sigma \rightarrow [0,1]$,
such that $f^{-1}(0) = \partial \Sigma$, and
the length of $f^{-1}(x)$ is at most
$C \sqrt{\gamma+1} \sqrt{A}+L$ for all $x \in [0,1]$ and
some universal $C<1000$.
\end{theorem}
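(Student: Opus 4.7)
The plan is to argue by induction on the genus $\gamma$, reducing to a planar base case by iteratively cutting $\Sigma$ along short non-separating simple closed curves, and handling the planar case by a Cheeger-type iterated bisection. The crucial geometric input is a short non-separating curve estimate in the Gromov--Hebda spirit: on a Riemannian surface of genus $\gamma \geq 1$ and area $A$ there exists a simple closed curve $c$, non-separating in $\Sigma$ and disjoint from $\partial\Sigma$, of length at most $C_0 \sqrt{A/(\gamma+1)}$ for some universal $C_0$. Such a $c$ can be produced by a min-max argument over the space of loops representing a nontrivial class in $H_1(\Sigma;\mathbb{Z}/2)$, or by a covering-space argument combined with a Besicovitch-type inequality.

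For the inductive step, suppose the theorem holds up to genus $\gamma - 1$, and let $c \subset \Sigma$ be as above. Cut $\Sigma$ along $c$ to obtain a surface $\Sigma'$ of genus $\gamma - 1$, the same area $A$, and boundary of length $L + 2 \Length(c)$. By the induction hypothesis there is a Morse function $f' : \Sigma' \to [0,1]$ with $(f')^{-1}(0) = \partial\Sigma'$ and fibers of length at most $C\sqrt{\gamma}\sqrt{A} + L + 2\Length(c)$. Reglue along $c$ and define $f : \Sigma \to [0,1]$ by a short ``collar phase'' near $t = 0$ in which the level sets interpolate continuously from $\partial\Sigma$ to $\partial\Sigma \cup (c \text{ traversed twice})$, followed by a reparametrized copy of $f'$ on the complement of the collar. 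Each fiber of $f$ then has length at most $\Length((f')^{-1}(\cdot)) + 2\Length(c) \leq C\sqrt{\gamma}\sqrt{A} + L + 4 C_0 \sqrt{A/(\gamma+1)}$, which is at most $C\sqrt{\gamma+1}\sqrt{A} + L$ provided $C$ is chosen large enough relative to $C_0$; using $\sqrt{\gamma+1}-\sqrt{\gamma} = 1/(\sqrt{\gamma+1}+\sqrt{\gamma})$, the closing condition reduces to $C \geq 8 C_0$.

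For the base case $\gamma = 0$ ($\Sigma$ a planar surface with boundary), apply iterated bisection: find a simple arc or closed curve $c_1 \subset \Sigma$ of length at most $C_1\sqrt{A}$ that separates $\Sigma$ into two components of area at most $A/2$, which exists by the planar isoperimetric/Cheeger inequality for the area-balancing variational problem on surfaces of genus zero. Recurse on each piece. At level $n$ each surviving piece has area at most $A/2^n$ and is cut by a curve of length at most $C_1\sqrt{A/2^n}$. Stack the resulting nested decomposition into a single Morse function whose level sets consist of at most one cut curve per level together with portions of $\partial\Sigma$; summing the geometric series yields fibers of length at most $\sum_{n\geq 0} C_1\sqrt{A/2^n} + L \leq 4 C_1\sqrt{A} + L$.

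The main obstacle is the careful bookkeeping to ensure that the universal constant satisfies $C < 1000$: the length contributions $\sqrt{A/(\gamma+1)}$ from successive cuts must telescope into $C\sqrt{(\gamma+1)A}$ while the boundary term $L$ is not multiplicatively amplified across the recursion. This forces the slightly stronger inductive statement ``$\Length(f^{-1}(x)) \leq C\sqrt{\gamma+1}\sqrt{A} + L$'' with $L$ appearing linearly and exactly once, and dictates that the base-case bisection be engineered to preserve the boundary term unaltered. A secondary technical point is that the concatenated function $f$ is a priori only piecewise smooth near the seams created by the cut curves $c$ and the collar transitions; a standard perturbation replaces these seams with finitely many small Morse critical points whose nearby level sets remain short, producing a bona fide Morse function without deteriorating the length bound.
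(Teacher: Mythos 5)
Your genus-induction scheme hinges on the claim that every Riemannian surface of genus $\gamma\geq 1$ and area $A$ contains a non-separating simple closed curve of length at most $C_0\sqrt{A/(\gamma+1)}$, and this claim is false. By the Buser--Sarnak construction there are closed hyperbolic surfaces of arbitrarily large genus whose shortest homologically non-trivial closed geodesic has length at least $\tfrac{4}{3}\log\gamma-c$, while their area is $4\pi(\gamma-1)$, so that $\sqrt{A/(\gamma+1)}$ stays bounded by a universal constant; no min-max over a fixed class in $H_1(\Sigma;\mathbb{Z}/2)$ or covering-space/Besicovitch argument can beat this. The best general upper bound for the shortest non-separating curve carries an extra factor of order $\log(\gamma+1)$, and with that correction your induction no longer closes: the condition at the step from genus $\gamma-1$ to $\gamma$ becomes $C\gtrsim C_0\log(\gamma+2)$ rather than $C\geq 8C_0$, and summing the cut lengths over all genus-reduction steps produces a bound of order $\sqrt{A\gamma}\,\log\gamma$ instead of $C\sqrt{(\gamma+1)A}$. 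So the inductive step, which is the heart of your proposal, fails for large genus; the genus-zero base case by repeated balanced bisection is fine in spirit, but it cannot repair this.

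The paper avoids exactly this trap by never cutting along topologically essential curves and never inducting on genus. Instead, after gluing small discs into the boundary it uniformizes: it takes the constant-curvature conformal representative $\Sigma_0$ and a conformal diffeomorphism $\phi:\Sigma_0\to\Sigma'$, and uses the length--area method on metric balls of $\Sigma_0$ (and, when the relevant radius exceeds $1$, on unions of unit balls in the spirit of \cite{CM08}, cf. \cite{GL}) to find, inside any open subset $U$, a relative cycle of length at most $6.48\max\{1,\sqrt{\gamma}\}\sqrt{Area(U)}$ that splits off at least a $\tfrac{1}{24}$-fraction of the area. The genus enters only through the inequality $Area(\Sigma_0)\leq 4\pi\max\{1,\gamma-1\}$ for the model surface, not through any systolic-type estimate, which is why no Buser--Sarnak obstruction arises. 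The recursion is then on area (pieces shrink by the factor $\tfrac{23}{24}$), with the small-area base case and the gluing of Morse functions on adjacent pieces handled by Lemmas 19 and 18 of \cite{L13}; the latter is also where the careful ``boundary length appears once'' bookkeeping you worry about is actually carried out. If you want to salvage your outline, you would have to replace the non-separating-curve step by an area-subdivision step of this conformal type; as written, the proposal has a genuine gap.
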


This version of the theorem is slightly more general
and the upper bound for constant $C$ is better than in \cite{BS10}.
These improvements follow from the methods of \cite{L13},
\cite{GL} and \cite{L14}.

\begin{proof}
To each boundary component of $\Sigma$
we glue a very small disc to obtain a closed 
surface $\Sigma'$ of area $A+ \epsilon'$.

Let $\Sigma_0$ denote the unique surface 
of constant Gaussian curvature $-1$, $0$ or $1$,
which lies in the conformal class of $\Sigma$ 
and let $\phi: \Sigma_0 \rightarrow \Sigma'$ be a
conformal diffeomorphism.

For each $U \subset \Sigma'$ we will construct
a Morse function $f_U:U \rightarrow \mathbb{R}$,
such that $f^{-1}(0) = \partial U$,
and the length of $f^{-1}(x)$ is at most
$616 \sqrt{\gamma+1} \sqrt{Area(U)}+length(\partial U)$ 
for all $x \in [0,1]$.

\textbf{Step 1.} Choose $\epsilon>0$ much smaller than the injectivity 
radius of $\Sigma'$ and suppose $Area(U)< \epsilon^2$.
It follows from the systolic inequality on surfaces that 
the genus of $U$ is $0$. Let $\delta>0$ be a small number. 
By Lemma 19 from \cite{L13} if $\epsilon$ is
sufficiently small 
then there exists a Morse function 
$f: U \rightarrow [0,1]$ with $f^{-1}(0)=\partial U$
and the length of fibers at most $length(\partial U)+\delta$.

\textbf{Step 2.} 
Now we prove that for every
open subset $U \subset \Sigma$ there exists
a relative cycle $c$ (with $\partial c \subset \partial U$),
subdividing $U$ into two subsets of area
at least $\frac{1}{24}Area(U)$ and such that
$length(c) < 6.48 \max\{1, \sqrt{\gamma}\} \sqrt{Area(U)}$.
Let $r$ be the smallest radius such that
there exists a ball $B_r(x) \subset \Sigma_0$ (on the constant curvature conformal representative),
such that $Area_{\Sigma'}(\phi (B_r(x)) \cap U) = \frac{Area_{\Sigma'}(U)}{12}$.

We consider two cases. Suppose first that $r\leq 1$
then it it follows by comparison with a constant curvature space
that the annulus $B_{3r/2}(x) \setminus B_r(x)$ can be covered
by $10$ discs of radius $r$ in $\Sigma_0$.
Let $x$ be such that $Vol_{\Sigma'}(\phi( B_r(x)) \cap U)$
is maximazed.
It follows from the choice of $x$ and $r$ that
$Vol_{\Sigma'}(\phi(B_{3r/2}(x) \setminus B_r(x)) \cap U)
\leq \frac{10}{12} Area_{\Sigma'}(U)$.
Using the length-area method (cf. \cite{L14})
we find that there exists a cycle in the image of the annulus
of length 
$$ \leq \frac{1}{0.5} \sqrt{Area_{\Sigma_0}(B_{3r/2}(x)\setminus B_r(x))}
\sqrt{\frac{10}{12}Area_{\Sigma'}(U)}
\leq 4.12 \sqrt{Area_{\Sigma'}(U)}.$$

Now suppose $r\geq 1$. 
In this case we use an idea of \cite{CM08} (cf. proof of Lemma 3.3
in \cite{GL}) of considering systems of balls of radius $1$.
Let $k$ be the smallest integer, such that there exist
$x_1,...,x_k \in \Sigma_0$ with $Area_{\Sigma'}(\phi (\bigcup B_1(x_i))\cap U) \geq
\frac{Area_{\Sigma'}(U)}{24}$. Assume that $x_1,...,x_k$ are chosen in such a way that
this quantity is maximized. Observe that by the choice of $k$ we have
$Area_{\Sigma'}(\phi (\bigcup_{i=1}^k B_1(x_i))\cap U)< \frac{Area_{\Sigma'}(U)}{12}$.
For each $i=1,...,k$, let $B_1(y_i^1),...,B_1(y_i^{10})$
be a collection of $10$ balls of radius $1$ covering the 
annulus $B_{3/2}(x)\setminus B_1(x) \subset \Sigma_0$. 
By our choice of $x_i$ for each $j=1,...,10$ we have 
$Area_{\Sigma'}(\phi (\bigcup_{i=1}^k B_1(y_i^j))\cap U) \leq
Area_{\Sigma'}(\phi (\bigcup_{i=1}^k B_1(x_i))\cap U)< \frac{Area_{\Sigma'}(U)}{12}$.
As in the case $r \leq 1$ we use coarea formula and the length-area 
method to find a relative cycle $c$ in the image of $1/2-$neighbourhood
$\phi(\{x: 0<dist_{\Sigma_0}(x,\bigcup_{i=1}^k B_1(x_i))<1/2 \})\cap U$ 
of length at most 
$ \leq \frac{1}{0.5} \sqrt{Area_{\Sigma_0}(\Sigma_0)}
\sqrt{\frac{10}{12}Area_{\Sigma'}(U)}$.
Cycle $c$ subdivides $U$ into two parts each of area at least
$\frac{1}{24} Area_{\Sigma'}(U)$.
For a surface of genus $\gamma$ and constant curvature $1$, $0$ or $-1$
we have $Area_{\Sigma_0}(\Sigma_0)\leq 4 \pi \max\{1, \gamma -1\}$.
We conclude that the length of $c$ is bounded above by
$6.48 \max\{1, \sqrt{\gamma}\} \sqrt{Area_{\Sigma'}(U)}$.

\textbf{Step 3.} Let $U_1$ and $U_2$ be two open subsets
of $\Sigma$ with disjoint interiors and let $f_i: U_i \rightarrow 
[0,1]$, $i=1,2$, be Morse functions,
such that $f_i^{-1}(0) = \partial U_i$ and 
$length(f_i^{-1}(t)) \leq length(\partial U_i) + C$.
By Lemma 18 in \cite{L13}
there exists a Morse function $f: U_1 \cup U_2 \rightarrow [0,1]$
with $length(f_i^{-1}(t)) \leq length(\partial (U_1 \cup U_2))
+ 2length(\partial U_1 \cap \partial U_2)+ C$ and
$f^{-1}(0) = \partial (U_1 \cup U_2)$.

\textbf{Step 4.} To finish the proof we combine three steps above
in the following inductive argument. 
We claim that for each integer $0 \leq n \leq \log_{\frac{24}{23}} \frac{Area(\Sigma')}{\epsilon}$ and for every $U$ with 
$(\frac{24}{23})^{n-1} \epsilon<
Area(U) \leq (\frac{24}{23})^{n} \epsilon$
there exists a Morse function $f_U:U \rightarrow \mathbb{R}$,
such that $f^{-1}(0) = \partial U$,
and the length of $f^{-1}(x)$ is at most
$616 \sqrt{\gamma+1} \sqrt{Area(U)}+length(\partial U) + \delta$ 
for all $x \in [0,1]$.

By Step 1 the claim is true for $n=0$.
Suppose that every subset of $\Sigma'$ of area
at most $(\frac{24}{23})^{n-1} \epsilon$ satisfies
the inductive hypothesis. 
By Step 2 we can subdivide $U$ into two subsets
of area $\leq (\frac{24}{23})^{n-1} \epsilon$ by 
a cycle of length $6.48 \max\{1, \sqrt{\gamma}\} \sqrt{Area(U)}$.
By Step 3 there exists a desired Morse function with length of fibers
$\leq 616 \sqrt{\gamma+1} \sqrt{\frac{23}{24} Area(U)}+
2 * 6.48\sqrt{\gamma+1} \sqrt{Area(U)} +length(\partial U)+ \delta$.
$\delta$ can be chosen much smaller than $Area(U)$. This finishes the 
inductive argument.
\end{proof}


We state a parametric version of this result.

\begin{theorem} \label{thm:parametric}
Let $\Sigma$ be a surface of genus $\gamma$ and
let $\{g_t\}_{t \in [0,1]}$ be a smooth family of Riemannian metrics
on $\Sigma$, such that the area $A(\Sigma,g_t) \leq A$ for 
some constant $A$. There exists a continuous family of functions
$f_t: (\Sigma, g_t)  \rightarrow \mathbb{R}$, $t \in [0,1]$, such that
for each $x \in \mathbb{R}$ we have that
$f_t^{-1}(x)$ is a $1$-cycle in $(\Sigma,g_t)$
of length at most $2000 \sqrt{(\gamma +1) A}$.
All functions $f_t$ are Morse except possibly
for finitely many with a finite number of ordinary cusp singularities.
There exists a corresponding continuous family of sweepouts
in $\Gamma(\Sigma)$ (see Section \ref{intro to 1-cycles}).
\end{theorem}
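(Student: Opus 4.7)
The plan is to promote Theorem \ref{surface sweepout} from a single Morse function to a $t$-parametrized family, paying at most a factor of two in the fiber length bound. The proof has three steps: a finite partition of $[0,1]$ on which a single Morse function works, a soft-max interpolation across each transition, and a Cerf-theoretic cleanup that produces Morse functions except at finitely many cusp times. Fix a small $\eta > 0$ to be chosen later. By smoothness of $t \mapsto g_t$ and compactness, choose $0 = s_0 < s_1 < \cdots < s_N = 1$ fine enough that on each $[s_{i-1}, s_i]$ the identity map $(\Sigma, g_{s_i}) \to (\Sigma, g_t)$ is $(1\pm\eta)$-bi-Lipschitz. Apply Theorem \ref{surface sweepout} to $(\Sigma, g_{s_i})$ (empty boundary) to obtain a Morse function $\phi_i : \Sigma \to [0,1]$ whose $g_{s_i}$-fibers have length less than $1000\sqrt{(\gamma+1)A}$; by bi-Lipschitz comparison, the $g_t$-fibers of $\phi_i$ have length less than $(1+\eta)\cdot 1000\sqrt{(\gamma+1)A}$ for every $t \in [s_{i-1}, s_i]$. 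Using $\phi_i$ as $f_t$ on the interior of $[s_{i-1}, s_i]$ handles everything except the behavior near the transition times $s_i$.

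On short subintervals $[s_i - \delta, s_i + \delta]$ about each transition, interpolate between $\phi_{i-1}$ and $\phi_i$ via a smoothed maximum
\[
f_\tau(p) \;=\; m_\ep\bigl(\phi_{i-1}(p) - a(\tau),\; \phi_i(p) - b(\tau)\bigr),
\]
where $m_\ep$ is a convex smooth approximation of $\max$ and $(a(\tau), b(\tau))$ traces a straight segment in $\R^2$ chosen so that at $\tau = s_i - \delta$ the first argument dominates pointwise (so $f_\tau$ equals $\phi_{i-1}$ up to an additive constant) and at $\tau = s_i + \delta$ the second dominates. For intermediate $\tau$ and any height $c$, the level set $f_\tau^{-1}(c)$ is contained in an $\ep$-neighborhood of $\phi_{i-1}^{-1}(a(\tau)+c) \cup \phi_i^{-1}(b(\tau)+c)$, so its $g_t$-length is bounded by $2(1+\eta)\cdot 1000\sqrt{(\gamma+1)A} + O(\ep)$, strictly less than $2000\sqrt{(\gamma+1)A}$ once $\eta$ and $\ep$ are small enough.

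The interpolating family $\{f_\tau\}$ is smooth but not automatically Morse. By Cerf's genericity theorem for one-parameter families of functions on a closed surface, a $C^\infty$-small perturbation yields Morse functions for all but finitely many $\tau$, at which a single ordinary cusp (birth or death) appears; such perturbations alter fiber lengths by $o(1)$ and preserve the $2000\sqrt{(\gamma+1)A}$ bound. Finally, applying Lemma \ref{Morse to sweepout} at each $t$ and invoking its constant-curve insertion trick to handle birth/death of components at the finitely many cusp times (and at the matching times $s_i$) produces the required continuous family $\{z_t\}$ in $\Gamma(\Sigma)$, and the sweepout property follows from Almgren's isomorphism as in the proof of Lemma \ref{Morse to sweepout} because the construction agrees with $\phi_i$ away from short transition subintervals.

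The main obstacle is the length estimate in the interpolation step. Away from the ridge $R_\tau = \{\phi_{i-1} - a(\tau) = \phi_i - b(\tau)\}$, the smoothed maximum agrees with one of its arguments up to an error of order $\ep$, so level sets of $f_\tau$ locally coincide with those of $\phi_{i-1}$ or $\phi_i$; near $R_\tau$ the level sets get rounded on scale $\ep$, contributing an extra length of order $\ep \cdot \#\bigl(R_\tau \cap f_\tau^{-1}(c)\bigr)$. Showing that this count can be controlled uniformly for generic straight segments $(a(\tau),b(\tau))$, so that the $O(\ep)$ slack really is negligible compared with the gap $2000 - 2(1+\eta)\cdot 1000$, together with the verification that only ordinary cusps arise in the Cerf perturbation, is where the bulk of the technical work is concentrated.
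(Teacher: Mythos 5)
Your proposal is correct in outline and rests on the same key idea as the paper's proof: subdivide $[0,1]$ so that consecutive metrics are bi-Lipschitz comparable, take the Morse function of Theorem \ref{surface sweepout} at each gridpoint, and interpolate between consecutive Morse functions, accepting a factor $2$ in the fiber length because intermediate fibers are assembled from arcs of one level set of each function. The difference is in how the interpolation is realized. The paper (Proposition \ref{prop:homotopy}) builds, for each homotopy parameter, the family of cycles $\partial\bigl(f_1^{-1}((-\infty,t])\cup f_0^{-1}((-\infty,s'])\bigr)$ and then must convert this two-parameter family of cycles back into a continuous family of functions, which is the most delicate part of its argument (the cycles for different $s$ can overlap, so they are not automatically level sets of any function); you instead work directly at the level of functions via the smoothed maximum $m_\ep\bigl(\phi_{i-1}-a(\tau),\phi_i-b(\tau)\bigr)$, which is the same construction in disguise (a maximum of shifted functions corresponds to intersections of sublevel sets, where the paper uses unions) and gives continuity of the family of functions for free. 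What your route buys is a cleaner passage from cycles to functions; what it costs is that the length bound is no longer exact but depends on controlling the level set near the ridge $\{\phi_{i-1}-a=\phi_i-b\}$, and the Morse-except-cusps statement must come from Cerf theory rather than being read off an explicit construction.

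Two points need attention. First, the arithmetic as written fails: $2(1+\eta)\cdot 1000\sqrt{(\gamma+1)A}+O(\ep)$ is not less than $2000\sqrt{(\gamma+1)A}$ for any $\eta>0$; you must quote Theorem \ref{surface sweepout} with its actual constant $C<1000$ (the proof there gives roughly $630$), exactly as the paper does, so that $2(1+\eta)C+O(\ep)<2000$ for $\eta,\ep$ small. Second, the ridge issue you flag is genuine but fillable by the same preliminary genericity step the paper performs: perturb $\phi_{i-1},\phi_i$ so that their level curves meet transversally except at finitely many tangencies located away from critical points (Thom multijet transversality, i.e. make $(\phi_{i-1},\phi_i):\Sigma\to\R^2$ a generic map). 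Then for each $(\tau,c)$ the corners of the exact-max level set are the finitely many intersection points of $\phi_{i-1}^{-1}(a+c)$ with $\phi_i^{-1}(b+c)$, their number is uniformly bounded over the compact $(\tau,c)$-range, and smoothing contributes length $O(\sqrt{\ep})$ per corner even at a quadratic tangency; without such a uniform bound a single global smoothing scale $\ep$ would not suffice. Also note that at $\tau=s_i\pm\delta$ your $f_\tau$ agrees with $\phi_{i-1}$ or $\phi_i$ only up to an additive constant, so you should remove that constant by a further level-set-preserving linear homotopy to keep the family of functions itself continuous.
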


Theorem \ref{thm:parametric} easily follows from the
following proposition conjectured by A. Nabutovsky
in a conversation with one of the authors.

\begin{proposition} \label{prop:homotopy}
Let $\Sigma$ be a closed Riemannian surface
and let $f_i:\Sigma \rightarrow \mathbb{R}$,
$i=0,1$, be two Morse functions, such that the length of $f_i^{-1}(x)$
is bounded above by $L$ for all $x$. Then $f_0$ and $f_1$
are homotopic through functions $f_t$, $0\leq t \leq 1$, with
the length  of $f_t^{-1}(x)$ bounded above by $2L+\epsilon$ for all $x$ and $t$ and 
arbitrarily small $\epsilon>0$.
Functions $f_t$ are all Morse, except for finitely many,
which may have a finite number of ordinary cusp singularities.
There exists a corresponding continuous family of sweepouts
in $\Gamma(M)$.
\end{proposition}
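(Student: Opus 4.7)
The natural approach is an explicit max-based interpolation. After rescaling assume $f_0, f_1 : \Sigma \to [0,1]$, and for a large constant $M > 1$ define
\[
g_t(x) = \max\bigl(f_0(x) - Mt,\ f_1(x) - M(1-t)\bigr).
\]
Since $f_1 - M < 0 \leq f_0$, we have $g_0 = f_0$, and symmetrically $g_1 = f_1$. For a regular value $c$, the level set $\{g_t = c\}$ splits in two: on $\{f_0 - Mt > f_1 - M(1-t)\}$ it coincides with $\{f_0 = c + Mt\}$, and on the complementary region it coincides with $\{f_1 = c + M(1-t)\}$. Each piece lies inside a single level set of $f_0$ or $f_1$, so the total length is at most $2L$ for the non-smooth family $\{g_t\}$.

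The next step is to smooth $g_t$ without essentially lengthening the fibers. Fix a small $\eta > 0$, let $\max_\eta: \mathbb{R}^2 \to \mathbb{R}$ be a smooth approximation of $\max$ that agrees with $\max$ outside the strip $\{|a-b| < \eta\}$, and set $f_t = \max_\eta(f_0 - Mt, f_1 - M(1-t))$. Outside the transition strip $S_t = \{x : |f_0(x) - f_1(x) - M(2t-1)| < \eta\}$ the level sets of $f_t$ are exactly those of $g_t$ and contribute at most $2L$. Inside $S_t$, the curve $\{f_t = c\}$ is the $F$-preimage (with $F = (f_0, f_1) : \Sigma \to \mathbb{R}^2$) of a smoothly rounded corner of $\mathbb{R}^2$-length $O(\eta)$ at the point $(c + Mt, c + M(1-t))$. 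For generic $(t,c)$ the intersection $\{f_0 = c + Mt\} \cap \{f_1 = c + M(1-t)\}$ is finite and transverse, so $F$ is locally a diffeomorphism there and the preimage of the rounded corner has length $O(\eta)$. Choosing $\eta$ small enough then gives the uniform bound $\mathrm{length}(f_t^{-1}(c)) \leq 2L + \epsilon$.

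For the Morse/cusp assertion, standard $1$-parameter transversality (Cerf theory) shows that after an arbitrarily small perturbation of $f_0$ or $f_1$, supported away from $t = 0$ and $t = 1$, the family $\{f_t\}$ is Morse for all but finitely many $t$, with only ordinary birth--death (cusp) singularities at the exceptional times. A corresponding continuous family of sweepouts in $\Gamma(\Sigma)$ is then produced by applying Lemma \ref{Morse to sweepout} at each $t$, checking that the construction there depends continuously on the Morse function away from the cusp times, and matching the parametrizations across each cusp by an explicit local homotopy tracking the birth or death of the corresponding pair of critical points.

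The main obstacle will be controlling the length inside the transition strip $S_t$ uniformly in $t$ and $c$: on a measure-zero set of $(t,c)$ the transversality of $\{f_0 = c + Mt\}$ and $\{f_1 = c + M(1-t)\}$ fails, and a priori the $F$-preimage of the rounded corner near such a degeneracy could be long. Handling these bad pairs --- and coordinating them with the discrete birth--death times of the family --- requires a careful preliminary perturbation of $f_0, f_1$ to make all the relevant level sets generically transverse, in the spirit of the genericity arguments underlying Theorem \ref{surface sweepout} and Lemma \ref{Morse to sweepout}.
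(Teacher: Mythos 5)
Your interpolation is, at heart, the same mechanism the paper uses: the paper's cycles $z^t_s=\partial\big(f_1^{-1}((-\infty,t])\cup f_0^{-1}((-\infty,\tfrac{s-t}{1-t}])\big)$ are boundaries of unions of sublevel sets (a min-type interpolation), so each interpolating fiber consists of arcs of one fiber of $f_0$ and one fiber of $f_1$, giving the $2L$ bound; your smoothed $\max_\eta(f_0-Mt,\,f_1-M(1-t))$ encodes exactly this on the level of functions (intersections of sublevel sets). Working with functions directly is a real advantage --- the paper has to spend the second half of its proof converting the cycle families back into a continuous family of Morse functions --- but the cost is that all the degeneracies are now concentrated in your transition strip, and that is where your argument has a genuine gap.

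The bound $\mathrm{length}(f_t^{-1}(c))\le 2L+\epsilon$ must hold for \emph{every} $(t,c)$, and your control inside the strip rests on $F=(f_0,f_1)$ being a local diffeomorphism near the corner value $(c+Mt,\,c+M(1-t))$. This fails exactly when that value lies on the apparent contour of $F$ (i.e.\ when the two relevant fibers are tangent) or is a critical value of $f_0$ or $f_1$. Your proposed remedy --- a preliminary perturbation making ``all the relevant level sets generically transverse'' --- is not available: tangencies between fibers of $f_0$ and fibers of $f_1$ occur along the fold curves of $F$, whose image is a curve in value space, so for each $t$ there remain finitely many values $c$ at which the corner sits on a tangency; no perturbation removes these, it can only make them nondegenerate and isolated (this is precisely what the paper's multijet-transversality step achieves: disjoint critical points and at most one nondegenerate touching per pair of fibers). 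At such $(t,c)$ the preimage of the rounded corner is not an $O(\eta)$-Lipschitz image of an $O(\eta)$-curve, and ``choosing $\eta$ small enough'' is not justified uniformly in $(t,c)$; you need a separate local estimate (e.g.\ that near a nondegenerate tangency, or near a Morse critical point of one function where the other is regular, the strip contribution is $O(\sqrt{\eta})$), with uniformity as $(t,c)$ approaches the bad set. A secondary soft spot: continuity of the induced family in $\Gamma(\Sigma)$ is asserted by invoking Lemma \ref{Morse to sweepout} for each $t$, but that construction involves discrete choices (which constant component is recruited at a splitting, etc.), and you do not show these choices can be made continuously in $t$, in particular across the birth--death times; the paper sidesteps this by building the parametrized cycles directly out of arcs of the fibers of $f_0$ and $f_1$ and checking continuity there.
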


Note that cusp singularities occur in a generic 1-parameter
family of functions, so we can not expect them all to 
be Morse.

\begin{proof}

Without any loss of generality
we may assume that $f_i(\Sigma)=[0,1]$ 
for $i=0,1$. Let $z^i_s=f^{-1}_i(s)$, $s \in [0,1]$, be the 1-parametric family of
1-cycles given by the level sets of $f_i$.
Since $f_i$ is a Morse function we have that the family $\{z^i_s\}$ is a foliation
with finitely many singular leaves. The singularities are either constant curves
or curves with transverse self-intersections.

The set of singular points of $f_1$ and $f_2$
is finite. 
We can make finitely many local perturbations
so that the lengths of preimages changes by less than 
$\epsilon$ and the singular points of $f_1$ and $f_2$
are disjoint.
Moreover, by Thom's Multijet Transversality
we can make a perturbation so that each pre-image
of $f_1$
has only finitely many tangency points with preimages of $f_2$. 

To summarize, without any loss of generality
we may assume that families $z^i_{s}$ satisfy the
following:

\begin{enumerate}
\item If $z^1_{s'}$ is a singular leaf and $x$ is a singular 
point of $z^1_{s'}$ then it is disjoint from singular points
of $z^0_s$ for all $s$.

\item For each $s'$ all but finitely many $z^0_s$ intersect
$z^1_{s'}$ transversely; 
$z^0_s$ and $z^1_{s'}$ have at most one non-transverse touching
away from the singular points of $z^0_s$ and $z^1_{s'}$.
\end{enumerate}

Let $t \in [0,1]$. We define a 1-parameter family $z^t_s$, $s \in [0,1]$,
as follows. 

For $s \leq t$ we set $z^t_s=z^1_s$.
For $s>t$ we set $z^t_s = \partial (f_1^{-1}((-\infty,t]) \cup f_0^{-1}\big((-\infty, \frac{s-t}{1-t}])\big)$.
The family of cycles $\{ z^t_s \}$ is illustrated on Figure \ref{fig:parametric}.
Note that for each $s$ and $t$ the cycle $z^t_s$
consists of arcs of cycles $z^1_s$ and $z^0_{\frac{s-t}{1-t}}$.
Moreover, by properties (1) and (2) each $z^t_s$ is a collection of finitely many piecewise smooth curves with 
a finite number of corners. We can smooth out the corners by a small perturbation.
Using properties (1) and (2) we can perturb family $z^t_s$
so that it contains only finitely
many singular leaves.
Hence, we have constructed a continuous family of sweepouts by 
1-cycles in $\Gamma(\Sigma)$.

\begin{figure}
   \centering  
    \includegraphics[scale=0.4]{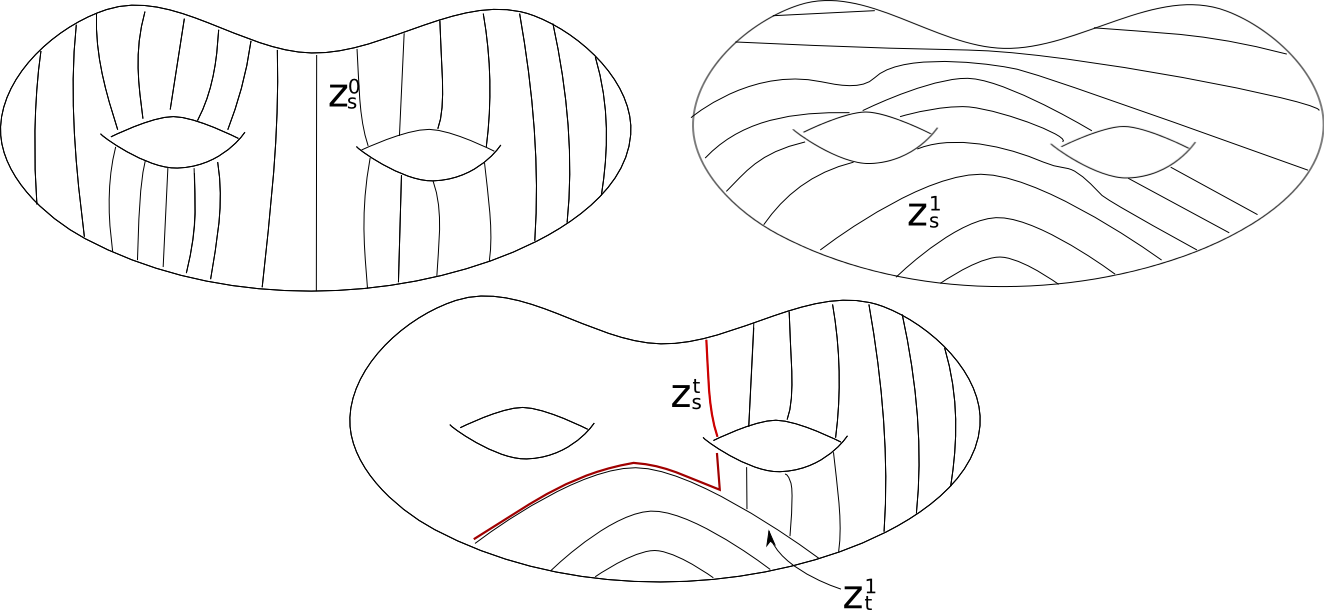}
    \caption{Homotopy through short sweepouts} \label{fig:parametric}
\end{figure}
 
Now we would like to construct a continuous family
of Morse functions with short fibers.
Note that the above family $\{z^t_s\}$ (for $t$ fixed)
is not a foliation because cycles 
$z^t_{s_1}$ and $z^t_{s_2}$ may overlap.
Also, we can not just glue together different fibers
of $f_0$ and $f_1$ because they may correspond to 
different values. 

We can fix this as follows.
Choose  $\epsilon_0>0$ sufficiently small,
so that every ball of radius $\epsilon_0$ 
is $(1+\epsilon)-$bilipschitz diffeomorphic
to a disc on the plane of the same radius. 
Let $\delta>0$ be such that cycles 
$f_0^{-1}(t)$ and $f_1^{-1}(t)$ have length 
less than $\epsilon_0$ for all $t \in [0,2\delta] \cup [1-\delta,1]$.

Let $t \in [\delta,1-\delta]$.
For $x \in f_1^{-1}([0,t])$ we set
$f_t(x)=f_1(x)$. 
Consider a foliation of $f_1^{-1}([t,1])$ by cycles
$z^{t+s} _{t + \frac{s}{\delta}(1-t)}$
for $s \in [0,\delta]$. This foliation will be
the level sets of $f_t$ on $f_1^{-1}([t,1])$.
We set $f_t(x)= t + s$ for
$x \in z^{t+s} _{t + \frac{s}{\delta}(1-t)}$. 
In particular, for $f_1^{-1}([t+\delta,1])$ we have 
$f_t(x)=t + \delta f_0(x)$.

For $t = \delta$
 we have that $f_{t}(x)= \delta + \delta f_0(x)$
everywhere except for a small set $U = f_1^{-1}([0,2\delta])$.
We linearly homotop $ f_{\delta}(x)$
to $\frac{f_{\delta}(x)-\delta}{\delta}$.
On $\Sigma \setminus U$ the function now coincides
with $f_0$. 
Since $U$ is contained in a collection of small
discs $(1+\epsilon)-$bilipschitz to discs in the plane,
we can homotop the function to $f_0$ in $U$ while controlling
the lengths of the fibers.

When $t=1-\delta$ function $f_t$ coincides with 
$f_1$ everywhere except for a small set $f_1^{-1}([1-\delta,1])$.
Similarly, we homotop the function to $f_1$ while controlling
the lengths of the fibers.

\end{proof}

We now prove Theorem \ref{thm:parametric}.
Let $C$ be the constant from Theorem \ref{surface sweepout}.
Fix a small $\epsilon >0$.
Subdivide $[0,1]$ into $n$ sufficiently small intervals $[t_i,t_{i+1}]$, 
so that for any $t_a, t_b \in [t_i,t_{i+1}]$ we have
$(1-\epsilon)^2 g_{t_a} \leq g_{t_b} \leq (1+\epsilon)^2 g_{t_a}$.

For each $i$ let 
$f_{t_{i}}: (\Sigma, g_{t_i}) \rightarrow \mathbb{R}$ be a Morse function
from Theorem \ref{surface sweepout} with fibers of length at most $C\sqrt{(\gamma+1)A}$. 
If $t \in [t_i,t_{i+1}]$ then we have that the functions
$f_{t_{i}}: (\Sigma, g_t) \rightarrow \mathbb{R}$
and $f_{t_{i-1}}: (\Sigma, g_t) \rightarrow \mathbb{R}$ have fibers
of length at most $(1+\epsilon)C \sqrt{(\gamma+1)A}$. 
By Proposition \ref{prop:homotopy} there exists a family of Morse functions
$\{h_r:\Sigma \rightarrow \mathbb{R}: r\in[0, 1]\}$, such that $h_0 = f_{t_{i-1}}$ and
$h_1= f_{t_{i}}$. Moreover, for any $r \in [0,1]$
the fibers of $h_r$ have length at most $2(1+\epsilon) C \sqrt{(\gamma+1)A}+ \epsilon
< 2000 \sqrt{(\gamma+1)A}$,
when measured with respect to $g_t$. 
We then set $f_t = h_{\frac{t-t_{i-1}}{t_{i}-t_{i-1}}}$ for $t \in [t_i,t_{i+1}]$.
This finishes the proof of Theorem \ref{thm:parametric}.


\section{Proof of Theorems \ref{MAIN_CYCLE} 
and Corollary \ref{SYSTOLIC}}\label{proof of main_cycle}

In this section we give a proof of Theorem \ref{MAIN_CYCLE}.

\begin{theorem} \label{main_cycles1}
Given a $3$-dimensional Riemannian manifold $M$ with an arbitrary metric of positive Ricci curvature, 
there exists a smooth map $f: M \rightarrow \mathbb{R}^2$
with fibers of length at most $C Vol(M)^{\frac{1}{3}}$.
The family of pre-images $\{f^{-1}(x) \}_{x \in \mathbb{R}^2}$ corresponds to
a continuous sweepout in $\Gamma(M)$.
\end{theorem}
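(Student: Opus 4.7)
The plan is to proceed by a dimension reduction. First I would invoke Theorem \ref{main theorem3} to produce a smooth one-parameter family of closed embedded surfaces $\{\Si_t\}_{t \in I}$ (with $I=[-1,1]$ if $\Si_0$ is orientable and $I=[0,1]$ otherwise) sweeping out $M$ (respectively $\ti M$) with genus at most $3$ and area at most $C\, Vol(M)^{2/3}$ for every $t$. On the interior of $I$ this is a smooth family of Riemannian surfaces of bounded genus and area, so Theorem \ref{thm:parametric} supplies a continuous family of Morse functions $h_t: \Si_t \rightarrow \mathbb{R}$ whose level sets all have length at most $2000 \sqrt{4 C\, Vol(M)^{2/3}} \leq C'\, Vol(M)^{1/3}$.

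I would then define the desired map $f: M \rightarrow \mathbb{R}^2$ by $f(x) = (t, h_t(x))$ whenever $x \in \Si_t$. Each fiber $f^{-1}(t,s)$ is contained in $\Si_t$ and coincides (up to finitely many isolated points, as in Lemma \ref{Morse to sweepout}) with $h_t^{-1}(s)$, so the required length bound is immediate. To promote this to a continuous sweepout in the strong sense of Section \ref{intro to 1-cycles}, I would apply Lemma \ref{Morse to sweepout} slicewise to obtain $\Gamma$-parametrizations of each $h_t^{-1}(s)$, and verify that these parametrizations depend continuously on $(t,s)$; the requisite cancellation-free continuity should follow from the smooth dependence of the surface family on $t$ combined with the homotopy construction of Proposition \ref{prop:homotopy} applied slicewise.

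The main obstacle will be the behavior of the construction at the degenerate boundary of the parameter domain. In Case 1 of Theorem \ref{main theorem3} the ends $\Si_{\pm 1}$ are merely graphs, so both their areas and the induced 1-cycles must collapse; I would handle this by extending the parametric construction continuously across the collapsing region with a homotopy-through-short-sweepouts argument in the spirit of Proposition \ref{prop:homotopy}, using that the area goes to zero near the endpoints. In Case 2 the surface $\Si_0 = \partial \ti M$ must be traversed with multiplicity two when the sweepout of $\ti M$ is projected back to $M$, which only affects the universal constant $C$. (By the Remark following Theorem \ref{main theorem3}, Case 2 does not in fact occur, but the argument a priori must accommodate it.) Gluing the two collapsing ends (respectively the doubled boundary) to the interior parametric sweepout will complete the construction of $f$ and of the associated continuous family of 1-cycles in $\Gamma(M)$, with the overall length bound absorbed into the universal constant.
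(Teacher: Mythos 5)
Your Case 1 argument is essentially the paper's: combine Theorem \ref{main theorem3} with Theorem \ref{thm:parametric}, define $f(x)=(t,h_t(x))$ along the Heegaard sweepout, and use the smooth dependence of $\Sigma_t$ on $t$ (plus the collapse of area near the graph ends) to get continuity in $\Gamma(M)$. That part is fine.

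The gap is in Case 2. You assert that traversing $\Sigma_0=\partial\ti M$ with multiplicity two ``only affects the universal constant $C$.'' That is not the issue, and dismissing it this way skips the heart of the proof. Pushing the sweepout of $\ti M$ forward to $M$ does give fibers of controlled length, but the resulting family of $1$-cycles is only continuous in the \emph{flat} topology: as the parameter crosses the slice corresponding to $\Sigma_0$, the cycles lying on the double cover $\ti\Sigma_0$ project $2$-to-$1$ onto cycles in $\Sigma_0$, and mass cancels discontinuously there, so the family is not continuous in $\Gamma(M)$. The paper states exactly this in the remark embedded in its proof, and then spends the bulk of the argument repairing it: it introduces a tubular neighborhood $U$ of $\Sigma_0$, whose distance spheres $S_t$ are bi-Lipschitz copies of $\ti\Sigma_0$, takes a Morse function $g$ on $\Sigma_0$ with short fibers, and over each level curve $c_s$ explicitly foliates the cylinder or M\"obius band $p^{-1}(c_s)$ by $1$-cycles using ``saddle'' and ``node'' foliations (with careful treatment of the singular leaves: creation/destruction of components and figure-eight merges, including a saddle-node annihilation of singularities), so as to obtain a map $h:U\to[0,1]\times[-1,0]$ whose fibers are short and vary continuously in $\Gamma$ with no cancellation of mass; only then is Proposition \ref{prop:homotopy} used to match $\{h^{-1}(s,0)\}$ with the boundary sweepout coming from $M\setminus U$. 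Your proposal contains no substitute for this construction --- ``applying Lemma \ref{Morse to sweepout} slicewise'' and invoking Proposition \ref{prop:homotopy} does not address the multiplicity-two degeneration at $\Sigma_0$, since Proposition \ref{prop:homotopy} homotopes two sweepouts of a \emph{fixed} surface and cannot by itself create a cancellation-free transition across a non-orientable slice. Citing \cite{KMN} would excuse you from Case 2 entirely, but since you explicitly state that the argument must a priori accommodate it, the accommodation you give is insufficient.
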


\begin{proof}

Let $M$ be a 3-manifold of positive Ricci curvature.
By Theorem \ref{main theorem3} 
we have the following two possibilities:

Case 1. There exist a smooth function $f_0: M \rightarrow [-1,1]$, 
such that the fibers of $f$
for $t\in (-1,1)$ form a family of smooth diffeomorphic 
surfaces of genus $\gamma \leq 3$ and area
$\leq C vol(M)^{2/3}$ and $f_0^{-1}(-1)$ and $f_0^{-1}(1)$ are graphs.
Decomposition into cycles of controlled length then immediately follows by Theorem \ref{thm:parametric}.
The fact that these 1-cycle are continuous in $\Gamma(M)$
follows from the fact that $\{ \Sigma_t\}$ are continuous
in the smooth topology.

Now we consider Case 2 of Theorem \ref{main theorem3}.
Let $\Si_0 \subset M$ be a non-orientable min-max minimal surface
as in the theorem. Let $\gamma\leq 3$ be the genus of the double cover $\ti{\Sigma}_0$ of $\Sigma_0$. 
Let $ S_t = \{x \in M: dist(x,\Si_0) = t\}$ 
be the set of all points at a distance $t$ from $\Si_0$.
We have that for a sufficiently small $\delta >0$
and all $0<t\leq\delta$, the surface $ S_t$
is bi-Lipschitz diffeomorphic to the double cover $\ti{\Si}_0$
of $\Si_0$
with bi-Lipschitz constant $1+\epsilon$.
 Let $U = \Sigma_0 \cup \{S_t \}_{t \in (0, \delta]}$
denote the tubular neighborhood of $\Si_0$.

\begin{remark}
Suppose we are interested in constructing a function
$f: M \rightarrow \mathbb{R}^2$ with fibers forming
a family of 1-cycles of controlled length and continuous in
the \emph{flat norm}, but not necessarily  
continuous in $\Gamma(M)$.
Then we can argue as follows.

Let $f_0: \Sigma_0 \rightarrow [0,1] \times \{0\} \subset \mathbb{R}^2$
be the Morse function from Theorem \ref{surface sweepout}.
Composing with the covering map we obtain a map from the double cover 
$\tilde{f}_0: \tilde{\Sigma}_0 \rightarrow [0, 1] \times \{0\}$. 
Let $\ti{M}$
denote the manifold with boundary from Theorem \ref{main theorem3},
such that interior of $\ti{M}$ is isometric
to $M \setminus \Sigma_0$ and $\partial \ti{M} = \tilde{\Sigma}_0$.
By Theorem \ref{thm:parametric} we construct a function
$\ti{h}: \ti{M} \rightarrow [0,1]^2$, such that 
the restriction of $\ti{h}$ to $\partial \ti{M}$ is $\ti{f}_0$.
We then define $h(x) = \ti{h}(x)$ for all $ x\in M \setminus \Sigma_0$
and $h(x) = f_0(x)$ for $x \in \Sigma_0$.

In the remainder of the proof we will modify this construction
in order to produce a family, which is continuous in $\Gamma(M)$.
Our argument is similar to constructions of families 
of cycles with no cancellation of mass in the works
of Almgren and Pitts (see \cite{P81}).
\end{remark}

By Theorem \ref{main theorem3} there exists a sweepout of $M \setminus U$
by surfaces of controlled area. As in the first case
we construct a map $f:\overline{M \setminus U} \rightarrow [0,1] \times [0,1]$ 
with preimages of controlled length, and such that the preimages form a continuous family of $1$-cycles. Moreover,
we can do it in such a way so that $f$ restricted to $\partial U$ is a Morse function and
$\{f^{-1}(t,0)\}_{t \in [0,1]}$ is a family of $1$-cycles
sweeping out $\partial U$.

Next we construct an extension of this map to $U$.

\begin{lemma}
There exists a map $h: U \rightarrow [0,1] \times [-1,0]$, such that
the length of $h^{-1}(t, s)$, $(t, s)\in[0, 1]\times[-1, 0]$ is at most $10^4 \sqrt{Area(\Si_0)}$; $h(\cdot, 0): \partial U\rightarrow [0, 1] \times 0$ is a Morse function; and
the family of cycles $\{h^{-1}(t, s): (t, s)\in [0, 1]\times [-1, 0]\}$ is continuous in $\Gamma$.
\end{lemma}

\begin{proof}
Let $p: U \rightarrow \Si_0$
be the projection map, i.e. $p$ is the identity map on $\Si_0$ and
it sends $x \in S_t \subset U$ to the unique point $y \in \Si_0$ with $dist(x,y)=t$ for $t \in (0,\delta]$. 
Let $p_t$ denote the restriction of $p$ to $S_t$. Observe that $p_t$
is locally $(1+\epsilon)$-bi-Lipschitz.

By Theorem \ref{surface sweepout} there exists a Morse function
$g: \Sigma_0 \rightarrow \mathbb{R}$,
such that all preimages of $g$ are bounded in length by $1600 \sqrt{\gamma+1} \sqrt{A}$. 
We may assume that $g(\Sigma_0)= [0,1]$.
Let $c_s \subset \Sigma_0$ denote the $1$-cycle $g^{-1}(s)$.
For each $s \in [0,1]$ we will define a function 
$h_s: p^{-1}(c_s) \rightarrow [0,1]$ 
with fibers of controlled length.

Suppose first that $c_s$ is the pre-image of a regular value of $g$.
We can write $c_s$ as a union of finitely many
disjoint embedded circles $c_s = \bigsqcup c_s^i$ in $\Sigma_0$.

The set $p^{-1}(c_s^i)$ is diffeomorphic to a cylinder
or a Mobius band. We will construct a foliation of $p^{-1}(c_s^i)$
by 1-cycles and use it to define function $h$.
Choose two distinct points $x$ and $y$ on $c_s^i$
and consider a small tubular neighborhood $V_x$ (resp. $V_y$)
of $p^{-1}(x)$ (resp. $p^{-1}(y)$) in $p^{-1}(c_s^i)$.
We foliate $V_x$ by 1-cycles as depicted on Figure \ref{fig:foliation}(a).
Clearly we can define a smooth function from $V_x$ to $[0,1]$, whose fibers
are 1-cycles in the foliation and which is non-degenerate everywhere
except for a saddle point at $x$. Call this type of 
foliation of $V_x$ a {\em saddle foliation}. Similarly,  
Figure \ref{fig:foliation}(b) depicts a foliation of
$V_y$ and we define the corresponding function from $V_y$ to $[0,1]$
with a singularity of index $2$ at $y$ (a maximum point for $h_s$). Call this type of foliation
of $V_y$ as {\em node foliation}.
We can extend $V_x$ and $V_y$ so that they cover all of $p^{-1}(c_s^i)$
and extend the corresponding foliations and functions in the obvious way.
Observe that the lengths of the preimages are bounded above
by $2 length(c^i_s)+O(\epsilon)$.
For $u \in p^{-1}(c_s)$ we define the function $h(u) = (s,-h_s(u)) \subset [0,1] \times [-1,0]$.
Hence, we described a construction of $h$ on each connected component of
$p^{-1}(c_s)$ in the case when $c_s$ is non-singular.
As $s$ varies we can vary $x$ and $y$ and the corresponding
foliations continuously and extend the map $h$
to $p^{-1}(g^{-1}([s,s+a)))$, where $s+a$ is the first singular value of $g$ after $s$.

\begin{figure}
   \centering  
    \includegraphics[scale=0.4]{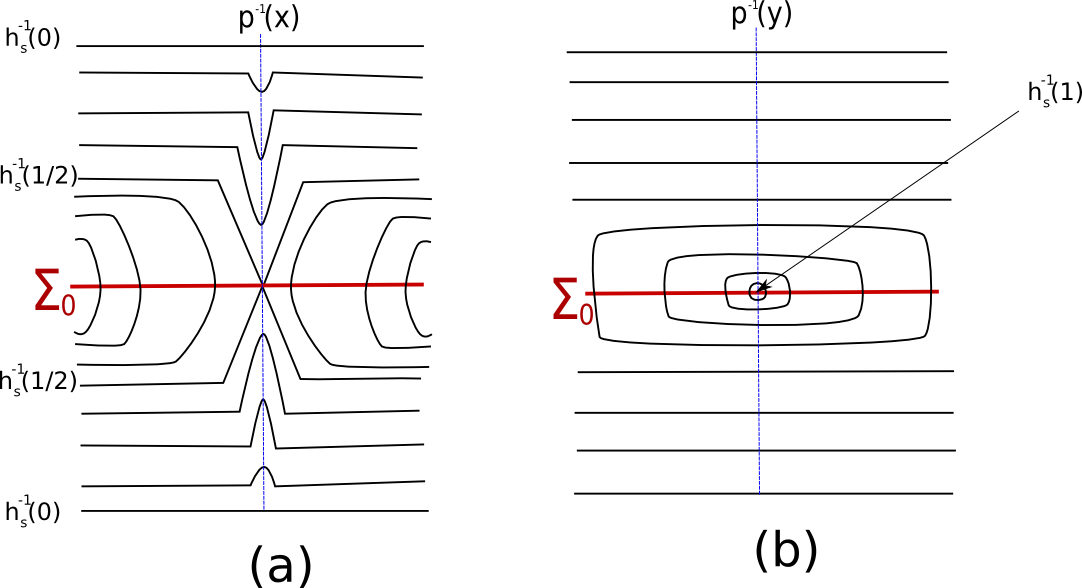}
    \caption{Foliation of $V_x$ and $V_y$ by fibers of function $h_s$.} \label{fig:foliation}
\end{figure}

Let $c_{s}^i$ be a singular connected component of $c_{s}$.
Consider the case when $c_{s}^i$ is a point.
This occurs when we have a creation or a destruction of a connected component.
Suppose first that $c_{s+\epsilon}$ has one less connected components than
$c_{s-\epsilon}$. The case of
creation of a connected component is 
treated similarly.
We modify the function $h$ on a neighborhood $V=p^{-1}(\{c^i_{s'}\}_{s' \in [s-\epsilon,s]}) \subset U$ as follows. 
Let $D_\epsilon \subset \mathbb{R}^2$ denote a disc of radius $\epsilon$. Observe that there exists a diffeomorphism $\phi: D_{\epsilon} \times [-\delta,\delta] \rightarrow V$,
such that for a concentric circle $S_r \subset  D_{\epsilon}$ of radius $r$ and $ t \in (0,\delta]$
we have
$\phi (S_r \times \{t,-t\}) = \Sigma_t \cap p^{-1}(c_{s-r}^i)$.
Let us consider the set $\phi(S_r \times \{t\})$. 
For $|t|\geq \frac{\delta}{\epsilon} r$ we define $h$ on $\phi(S_r \times \{t\})$
to be equal to $(s-r,\frac{|t|}{\delta}-1)$. In particular, if $x\in p^{-1}(c_{s}^i)$ and
$x \neq c_{s}^i$ then $h^{-1}(h(x))$ consists of two points.
For $|t|< \frac{\delta}{\epsilon} r$ we construct $h(x)$
exactly as how we constructed $h_s$ on the pre-image under $p$ of a non-singular
curve, but we scale the image so that $h$ is continuous along the diagonal
$|t|= \frac{\delta}{\epsilon} r$.
We do it as follows. Fix $r\in [0, \epsilon)$ and let $W = \phi(S_r \times (-\frac{r}{\epsilon}\delta,\frac{r}{\epsilon}\delta))$.
Construct a foliation of $W$ by 1-cycles with two singularities as on Figure \ref{fig:foliation}.
We then define $h$ on $W$ so that $h(W)=\{s-r\} \times [-1,-1+\frac{r}{\epsilon})$ 
and its preimages are given by 1-cycles in the foliation.

Suppose now that $c_{s+a}^i$ is a figure-8 curve with a self-intersection at a point $z \in c_{s+a}^i$.
This means that we either have a splitting of one component into two or a merging of two components
into one
(for otherwise we could perturb this family of cycles so that no singularity would occur).
We consider the case of two components merging and the argument for the other case is analogous.
For $s'<s+a$ let $c_{s'}^i$ and $c_{s'}^{i+1}$ be two components
that merge into $c_{s+a}^i$ at time $s+a$. 
We arrange $h_{s'}$ on $p^{-1}(c_{s'}^j)$ ($j=i,i+1$) so that 
$h_{s'}$ takes on its maximum at a point $y_j(s')$ on $p^{-1}(c_{s'}^j)$ with $y_j(s')$ 
converging to the self-intersection point
of the figure-8: $y_j(s') \rightarrow z$ as $s' \rightarrow s+a$.
For $p^{-1}(c_{s+a}^i)$ we obtain two node foliations (Figure \ref{fig:foliation}(b))
glued along $p^{-1}(z)$.
Observe that we can arrange the foliations to match properly so that they correspond to preimages of a smooth function on
$p^{-1}(c_{s+a}^i)$ and, moreover, we can extend
it to a foliation of $p^{-1}(c_{s+a-\epsilon}^i)$ by cycles satisfying
$2 length(c_{s+a-\epsilon}^i)+O(\epsilon)$ upper bound on their lengths.
This foliation has two node foliations and two saddle foliations.
We can arrange for one node foliation and one saddle foliation to 
collide and annihilate in a saddle-node bifurcation.
This can be done without increasing the length bounds by more than $O(\epsilon)$.
(Thinking of $h_s$ as a height function one can picture saddle-node bifurcation
as smoothing out a hill). This completes the construction of $h$. 
It is clear from the construction that the corresponding family
of cycles is continuous in $\Gamma(M)$.
\end{proof}

From the construction in the proof it follows that $\{h^{-1}(s,0)\}$, $s \in [0,1]$, is a family of $1$-cycles
sweeping out $\partial U$. Moreover $h$
has only finitely many singularities on $\partial U$,
all of them non-degenerate.
By Proposition \ref{prop:homotopy}
$\{h^{-1}(s,0)\}$ and $\{f^{-1}(s,0)\}$ can be connected by a family of sweepouts
of controlled length.
After a small perturbation this produces the desired map from $M$ to $\mathbb{R}^2$
with fibers of controlled length.
\end{proof}

We now use Theorem \ref{MAIN_CYCLE} 
to give alternative proofs of results of Gromov and Nabutovsky-Rotman
stated in Corollary \ref{SYSTOLIC}.
First, let $M \cong \mathbb{R}P^3$. By Theorem \ref{MAIN_CYCLE} there exists a 
function $f: M\rightarrow [0,1]^2$ and a continuous
sweepout $\{ z_t= f^{-1}(t)\}_{t \in [0,1]^2} \subset \Gamma$ 
of $M$ by 1-cycles of length at most $C Vol(M)^{\frac{1}{3}}$.
Corollary \ref{SYSTOLIC} follows from the 
lemma below.

\begin{lemma}
A connected component of some cycle $z_t$ in the sweepout 
that we constructed 
is a non-contractible loop in $M$.
\end{lemma}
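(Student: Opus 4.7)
My plan is to argue by contradiction. Assume that every connected component of every $z_t$ is null-homotopic in $M \cong \mathbb{R}P^3$; I will derive a contradiction with the non-triviality of the Almgren class $[F] \in H_3(M,\mathbb{Z})$ of the sweepout. By the construction of Section 5 and Almgren's isomorphism $\pi_2(\Z_1(M,\mathbb{Z}),\{0\}) \cong H_3(M,\mathbb{Z})$, the sweepout $\{z_t\}$ coming from the fibers of $f:M\to\mathbb{R}^2$ represents the fundamental class of $M$, i.e.\ the generator of $H_3(M,\mathbb{Z}) = \mathbb{Z}$.

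The heart of the proposal is to produce a continuous family of $2$-chains filling the cycles $z_t$. Each contractible loop in $M$ bounds an immersed disk, and since $\pi_2(M) = \pi_2(S^3) = 0$, the space of null-homotopies of any given loop is path connected with higher homotopy groups $\pi_i \cong \pi_{i+2}(S^3)$ for $i \geq 0$. Because the parameter space $K = [0,1]^2$ is contractible, the obstructions in $H^{i+1}(K;\pi_i)$ vanish, and one can choose a continuous family $D_t \in C_2(M)$ with $\partial D_t = z_t$ for all $t \in K$, satisfying $D_t = 0$ for $t \in K_0 = \partial K$. At the finitely many generic transitions in the family $\{z_t\}$ (birth, death, splitting, or merging of components) the fillings are matched via the local Morse models; for example, at a figure-$8$ pinch the parent disk splits naturally into two children disks.

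With the continuous filling $D$ in hand, I would feed it directly into Almgren's construction of the class $[F]$. Fix a fine triangulation of $K$, and for each edge $e = [v_0,v_1]$ take the edge filling $C_e := D_{v_1} - D_{v_0}$, which has $\partial C_e = z_{v_1} - z_{v_0}$ as required. For every $2$-simplex with vertices $v_0, v_1, v_2$, the sum $C_{[v_0,v_1]} + C_{[v_1,v_2]} + C_{[v_2,v_0]}$ telescopes to $0$, so the Almgren $3$-chain assembled from these fillings is identically zero, and hence $[F] = 0$ in $H_3(M,\mathbb{Z})$. This contradicts $[F] \neq 0$, and so some connected component of some $z_t$ must be non-contractible, as claimed.

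The main obstacle is the continuous extension of the fillings $D_t$ at the transitions where the combinatorial structure of $z_t$ changes. Although $K$ is contractible and kills the obvious global obstructions, the local compatibility of null-homotopies at splits and merges must be set up carefully in the configuration-space formalism. If one prefers to avoid this combinatorial bookkeeping, an equivalent route is to lift to the universal cover $\pi:S^3\to M$ and consider $\tilde{z}_t := \pi^{-1}(z_t)$: contractibility of each component allows a continuous ``halving'' $\tilde{z}_t = G(t) + \tau_*G(t)$, forcing $[\tilde F] = 2[G]$ in $H_3(S^3,\mathbb{Z}) = \mathbb{Z}$; on the other hand one computes $[\tilde F] = \mathrm{tr}_*[M] = [S^3]$, the generator, which is not divisible by $2$, closing the argument by the same contradiction.
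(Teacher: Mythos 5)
There is a genuine gap, and it is in the main route. The obstruction-theoretic construction of the continuous family of fillings $D_t$ is misapplied: since you prescribe $D_t=0$ on $K_0=\partial K$, the obstructions live in the \emph{relative} groups $H^{i+1}(K,K_0;\pi_i)$, not $H^{i+1}(K;\pi_i)$, and contractibility of $K$ does not kill them because $K/K_0\simeq S^2$. Contractibility of the components of $z_t$ only settles the $\pi_0$-level of the lifting problem (existence of some filling for each fixed $t$); by your own computation the next group is $\pi_1(\text{space of fillings})\cong\pi_3(M)\cong\mathbb{Z}$, so the secondary obstruction sits in $H^2(K,K_0;\mathbb{Z})\cong\mathbb{Z}$, and it is exactly (up to sign) the Almgren class of the family. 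So the filling $D$ you need exists \emph{if and only if} the conclusion you are trying to contradict fails; the argument is circular. A quick sanity check shows it proves too much: on $S^3$ every component of every $1$-cycle is automatically contractible, so your argument would show that no sweepout of $S^3$ by $1$-cycles has nontrivial Almgren class --- but such sweepouts exist (the paper relies on them for the geodesic-net half of Corollary \ref{SYSTOLIC}). Separately, even if some continuous filling were available, the telescoping computation of $[F]$ is not valid: Almgren's class must be computed with fillings of controlled mass, and replacing them by arbitrary fillings changes the per-simplex $3$-chains by multiples of the fundamental class, so ``edge fillings telescope to zero'' does not force $[F]=0$.

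Your alternative route is much closer to the paper's actual proof, but the key step is asserted rather than proved. The paper passes to Gromov's connected map $\overline f\colon M\to X$ onto the leaf space, proves that $X$ is simply connected (surjectivity of $\pi_1(M)\to\pi_1(X)$ plus the cup-product structure of $\mathbb{R}P^3$, which forces triviality on $H_1$ because $X$ is $2$-dimensional), and then notes that the hypothesis makes the induced $2$-fold cover $\overline X\to X$ connected --- a contradiction. Your continuous halving $\tilde z_t=G(t)+\tau_*G(t)$ is precisely a global section of that $2$-fold cover of the leaf space: pointwise contractibility gives the two lifts of each component, but it does not rule out monodromy in the choice of lift as components are created, destroyed, split and merged over the $2$-parameter family. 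Excluding that monodromy is exactly where the topology of $\mathbb{R}P^3$ must enter (it is what the paper's simple-connectivity argument for $X$ supplies), so as written the parity contradiction $[\tilde F]=2[G]$ versus $[\tilde F]$ a generator is incomplete; once the halving is justified (e.g.\ by the paper's argument for $\pi_1(X)=0$), that parity argument would be a correct finish.
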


\begin{proof}
The idea of the proof of this lemma was suggested to us
by Nabutovsky and Rotman (see also \cite{GZ}, where a version of this
lemma is proved for sweepouts of 2-dimensional tori).
For contradiction we assume that every connected component of
$z_t$ is contractible for all $t$.

Let $f:M \rightarrow [0,1]^2$ be the map
constructed in Theorem \ref{MAIN_CYCLE}.
From \cite[p.128]{Gro83} recall the definition of a connected map
$ \overline{f}:M \rightarrow X$
associated to $f$. The set $X$ is defined as the quotient
of $M$ by the equivalence relation 
$x \sim y$ if $x$ and $y$ are in the same connected component 
of $f^{-1}(t)$ for some $t \in [0,1]^2$ and $ \overline{f}:M \rightarrow X$
is the quotient map. There is a unique map 
$\tilde{f}: X \rightarrow [0,1]^2$, such that
$\tilde{f} \circ \overline{f}=f$.
Observe that $X$ must be connected
since $M$ is. Also, by our construction of $f$
we can endow $X$ with the structure of a polyhedral complex,
such that for an interior point $x$ of every face we have 
that every pre-image
$\overline{f}^{-1}(x)$ is a simple closed curve in $M$ and
for each point $x$ contained in the 1-skeleton of $X$ 
the pre-image $\overline{f}^{-1}(x)$ is a point or a closed
curve with a finite number of self-intersections.

We claim that $X$ is simply-connected. Indeed,
since $\overline{f}$ is a connected map
every loop in $X$ lifts to a loop in $M$,
so the induced map on the fundamental groups
is surjective. On the other hand,
the cohomology ring structure of $\mathbb{R}P^3$
forces the induced map on homology to be trivial.

Let $\overline{M}$ denote the universal cover
of $M$ and $p: \overline{M} \rightarrow M$ be the covering map.
Consider the composition $F= \overline{f} \circ p$.
By our assumption for each $x$ we have that
$\overline{f}^{-1}(x)$ is a contractible
closed curve in $M$,
so $F^{-1}(x)$ consists of $k$ disjoint closed curves in $\overline{M}$.
Let $\overline{F}:\overline{M} \rightarrow \overline{X}$ be
the connected map associated to $F$.
We obtain that $\overline{X}$ is a $k-$fold covering space for $X$.
This contradicts the fact that $X$ is simply-connected.
\end{proof}

Now by applying Brikhoff curve-shortening process we obtain
a closed geodesic in $M$ of length at most $C Vol(M)^{\frac{1}{3}}$
proving Corollary \ref{SYSTOLIC} for the case when $M \cong mathbb{R}P^3$.

If $M$ is homeomorphic to a 3-sphere then by the min-max argument described  in 
\cite{NR04} existence of a sweepout, which is continuous in $\Gamma(M)$ 
by short 1-cycles implies existence of a geodesic net with the desired length bound.


\section{Further discussion}\label{open questions}

The first open question is the relation between the Almgren-Pitts min-max minimal surface and the Simon-Smith (also Colding-De Lellis) min-max minimal surface in $(S^3, g)$ with positive Ricci curvature (or even in any 3-manifold with positive Ricci curvature). The Almgren-Pitts minimal surface, which we use in this paper, has area bounded by the $\frac{2}{3}$-power of the volume up to a universal constant, and genus $\leq 3$; while the Simon-Smith min-max minimal surface has genus $0$ (Heegaard genus for $S^3$) but no a priori area bound in terms of the volume of the ambient manifold. It is then a natural question to compare them.

The second open question is whether we could have diameter bound for the whole min-max family constructed in Theorem \ref{main theorem3} when we assume the scalar curvature lower bound instead of just getting a diameter bound for the min-max surface as in Theorem \ref{main theorem3.1}. 

These two questions are related to the problem of finding an upper
bound for the length of the shortest non-trivial closed geodesic in manifold $(S^3, g)$.
The methods of this paper
produce a sweepout of $(S^3, g)$ by short 1-cycles which yields a stationary geodesic net in
$(S^3, g)$ of controlled length, but they do not give any bound for the length of the shortest
closed geodesic. For this purpose one would need to consider
sweepouts by loops instead of 1-cycles. If a manifold $(S^3, g)$ admits a sweepout by 2-spheres or 2-tori
of controlled area $A$ and diameter $d$ then it seems plausible that using methods of
\cite{LNR} one could bound the length of the shortest closed geodesic in $(S^3, g)$ in
terms of $A$ and $d$.


\begin{tabbing}
\hspace*{7.5cm}\=\kill
Yevgeny Liokumovich          \> Xin Zhou\\
Department of Mathematics    \> Department of Mathematics\\
Imperial College London      \> Massachusetts Institute of Technology\\
South Kensington Campus      \> 77 Massachusetts Avenue \\
London SW7 2AZ, UK           \> Cambridge, MA 02139-4307, USA\\
Email: y.liokumovich@imperial.ac.uk  \> Email: xinzhou@math.mit.edu 
\end{tabbing}


\begin{thebibliography}{99}
\bibliographystyle{alpha}
\addtolength{\itemsep}{-0.7em}
\small

\bibitem[AF62]{AF62} F. Almgren, {\em The homotopy groups of the integral cycle groups}, Topology 1 (1962) 257-299.
\bibitem[AF65]{AF65} F. Almgren, {\em The theory of varifolds, Mimeographed notes}, Princeton, 1965.
\bibitem[BS10]{BS10} F. Balacheff, S. Sabourau, {\em Diastolic and isoperimetric inequalities on surfaces}, \textit{Ann. Sci. Ecole Norm. Sup.} 43 (2010) 579-605.
\bibitem[Ca15]{Ca15} A. Carlotto, {\em Generic finiteness of minimal surfaces with bounded Morse index}, arXiv:1509.07101v3.
\bibitem[CM08]{CM08} B. Colbois, D. Maerten, {\em Eigenvalues estimate for the Neumann problem of bounded domain}, J. Geom. Anal., 18(4):1022-1031, 2008.
\bibitem[CM11]{CM11} T. Colding and W. Minicozzi \uppercase\expandafter{\romannumeral2}, {\em A course in minimal surfaces}, Graduate Studies in Mathematics Volume 121, American Mathematical Society, 2011.
\bibitem[CC92]{CC92} E. Calabi, J. Cao, {\em Simple closed geodesics on convex surfaces},
J. Diff. Geom. 36 (1992), 517-549.
\bibitem[CD03]{CD03} T. Colding and C. De Lellis, {\em The min-max construction of minimal surfaces}, Surveys in differential geometry, Vol. VIII (Boston, MA, 2002), 75-107, Int. Press, Somerville, MA, 2003.
\bibitem[F66]{F66} T. Frankel, {\em On the fundamental group of a compact minimal submanifold}. Ann. Math. 83 (1966) 68-73.
\bibitem[F88]{F88} K. Frensel, {\em Stable complete surfaces with constant mean curvature}, Anais Acad. Bras. Cien., 60, 115-117, 1988.
\bibitem[GL]{GL} P. Glynn-Adey and Y. Liokumovich, {\em Width, Ricci curvature, and minimal hypersurfaces}, to appear in J. Differential Geometry, arXiv:1408.3656.
\bibitem[GZ]{GZ} P. Glynn-Adey and Z. Zhu, Subdividing three dimensional Riemannian discs, preprint, arXiv:1508.03746
\bibitem[Gro83]{Gro83} M. Gromov, {\em Filling Riemannian Manifolds}, J. Differential Geometry,
Volume 18, Number 1 (1983), 1-147.
\bibitem[Gro88]{Gro88} M. Gromov, {\em Dimension, non-linear spectra and width. In Geometric aspects of functional analysis},
pages 132-184. Springer, 1988.
\bibitem[Gro03]{Gro03} M. Gromov, {\em Isoperimetry of waists and concentration of maps}, Geom. Funct. Anal. 13 (2003), no. 1, 178-215.
\bibitem[Gro09]{Gro09} M. Gromov, {\em Singularities, expanders and topology of maps. I. Homology versus volume in the spaces of cycles},
Geom. Funct. Anal. 19 (2009), no. 3, 743-841.
\bibitem[Gro10]{Gro10} M. Gromov, {\em Singularities, expanders and topology of maps. Part 2: From combinatorics to topology via
algebraic isoperimetry}, Geom. Funct. Anal. 20 (2010), no. 2, 416-526.
\bibitem[Gro15]{Gro15} M. Gromov, {\em Morse Spectra, Homology Measures, Spaces of
Cycles and Parametric Packing Problems}, preprint.
\bibitem[Gu07]{Gu07} L. Guth, {\em The width-volume inequality}, Geom. Funct. Anal.   17, 1139 (2007).
\bibitem[Gu10]{Gu10} L. Guth, {\em Metaphors in systolic geometry}, arXiv:1003.4247.
\bibitem[Gu14]{Gu14}  L. Guth, {\em The waist inequality in Gromov's work}, The Abel Prize 2008-2012 (H. Holden and
R. Piene, eds.), Springer, 2014, pp. 181-195.
\bibitem[KMN]{KMN} D. Ketover, F.C. Marques and A. Neves, The catenoid estimate and its
geometric applications, arXiv:1601.04514
\bibitem[LNR]{LNR} Y. Liokumovich, A. Nabutovsky, R. Rotman,
{\em Contracting the boundary of a Riemannian 2-disc}, to appear in Geom. Funct. Anal.
\bibitem[L13]{L13} Y. Liokumovich, {\em Slicing a 2-sphere}, Journal of Topology and Analysis, 06(04):573-590, 2014.
\bibitem[L14]{L14} Y. Liokumovich, {\em Families of short cycles on Riemannian surfaces}, to appear in 
Duke Math., arXiv:1410.8436.
\bibitem[MN11]{MN11} F. Marques and A. Neves, {\em Rigidity of min-max minimal spheres in three-manifolds}, Duke Math. J. 161, 14 (2012) 2725-2752.
\bibitem[MN12]{MN12} F. Marques and A. Neves, {\em Min-max theory and the Willmore conjecture}, Ann. of Math. (2) 179, (2014) 683-782.
\bibitem[MSY]{MSY} B. Meeks, L. Simon, and S.-T. Yau, {\em Embedded minimal surfaces, exotic spheres, and manifolds with positive Ricci curvature}, Ann. of Math. 116 (1982), 621-659.
\bibitem[M11]{M11} Y. Memarian, {\em On Gromov's waist of the sphere theorem}, J. Topol. Anal. 3 (2011), no. 1, 7-36.
\bibitem[NR04]{NR04} A. Nabutovsky, R. Rotman, {\em Volume, diameter and the minimal mass of a stationary 1-cycle},
Geom. Funct. Anal. 14 (2004) no. 4, 748-790.
\bibitem[N14]{N14} A. Neves, {\em New applications of min-max theory}, arXiv:1409.7537v1.
\bibitem[MR15]{MR15} L. Mazet and H. Rosenberg, {\em Minimal hypersurfaces of least area}, arxiv:1503.02938v2.
\bibitem[P74]{P74} J. Pitts, Regularity and singularity of one dimensional stational integral varifolds on
manifolds arising from variational methods in the large, Symposia Mathematics, Vol. XIV, Roma, Italy, 1974
\bibitem[P81]{P81} J. Pitts, {\em Existence and regularity of minimal surfaces on Riemannian manifold}, Mathematical Notes 27, Princeton University Press, Princeton 1981.
\bibitem[PS15]{PS15} P. Papasoglu, E. Swensen, {\em A sphere hard to cut},
preprint, arxiv:1509.02307
\bibitem[Si83]{Si83} L. Simon, {\em Lectures on geometric measure theory}. Australian National University Centre for Mathematical Analysis, Canberra, 1983.
\bibitem[Sm82]{Sm82} F. Smith, On the existence of embedded minimal 2-spheres in the 3-sphere, endowed with an arbitrary Riemannian metric, PhD Thesis 1982, supervisor: Leon Simon.
\bibitem[S15]{S15} St\'ephane Sabourau, {\em Volume of minimal hypersurfaces in manifolds with nonnegative Ricci curvature},
      Journal f\"ur die Reine und Angewandte Mathematik (Crelle's Journal), to appear.
\bibitem[SY79]{SY79} R. Schoen, S.T. Yau, {\em Existence of incompressible minimal surfaces and the topology of three dimensional manifolds with non-negative scalar curvature}. Ann. Math. 110 (1979) 127-142.
\bibitem[SY83]{SY83} Richard Schoen and S.-T. Yau, {\em The Existence of a black hole due to condensation of matter}, Commun. Math. Phys 90, 575-579 (1983).
\bibitem[Y87]{Y87} S. T. Yau. {\em Nonlinear analysis in geometry}, Enseign. Math. (2) 33(1987), 109-158.
\bibitem[Z12]{Z12} X. Zhou. {\em Min-max minimal hypersurface in $(M^{n+1}; g)$ with $Ric_g > 0$ and $2\leq n\leq 6$}, J. Differential Geometry, 100 (2015) 129-160.

\end{thebibliography}
\end{document}